\newcommand{\Spvek}[2][r]{%
  \gdef\@VORNE{1}
  \left(\hskip-\arraycolsep%
    \begin{array}{#1}\vekSp@lten{#2}\end{array}%
  \hskip-\arraycolsep\right)}
\def\vekSp@lten#1{\xvekSp@lten#1;vekL@stLine;}
\def\vekL@stLine{vekL@stLine}
\def\xvekSp@lten#1;{\def\temp{#1}%
  \ifx\temp\vekL@stLine
  \else
    \ifnum\@VORNE=1\gdef\@VORNE{0}
    \else\@arraycr\fi%
    #1%
    \expandafter\xvekSp@lten
  \fi}
\newtheorem{thm}{Theorem}[section]
\newtheorem{lem}[thm]{Lemma}
\newtheorem{rem}[thm]{Remark}
\theoremstyle{definition}
\newcommand{\scr}[1]{\mathscr #1}
\definecolor{wco}{rgb}{0.5,0.2,0.3}
\numberwithin{equation}{section} \theoremstyle{remark}
\newcommand{\ua}{\uparrow}
\title{{\bf    Convergence Rate of  Euler-Maruyama Scheme  for SDEs with Rough Coefficients} }
\author{
{\bf  Jianhai Bao$^{a)}$,  Xing Huang$^{b)}$, Chenggui Yuan$^{c)}$}\\
\footnotesize{$^{a)}$School of Mathematics and Statistics, Central
South
University, Changsha 410083, China}\\
\footnotesize{$^{b)}$School of Mathematical Sciences, Beijing Normal
University, Beijing 100875, China}\\
\footnotesize{$^{c)}$Department of Mathematics, Swansea University,
Singleton Park, SA2 8PP, UK}}
\begin{document}
\allowdisplaybreaks
\def\R{\mathbb R}  \def\ff{\frac} \def\ss{\sqrt} \def\B{\mathbf
B}
\def\N{\mathbb N} \def\kk{\kappa} \def\m{{\bf m}}
\def\ee{\varepsilon}\def\ddd{D^*}
\def\dd{\delta} \def\DD{\Delta} \def\vv{\varepsilon} \def\rr{\rho}
\def\<{\langle} \def\>{\rangle} \def\GG{\Gamma} \def\gg{\gamma}
  \def\nn{\nabla} \def\pp{\partial} \def\E{\mathbb E}
\def\d{\text{\rm{d}}} \def\bb{\beta} \def\aa{\alpha} \def\D{\scr D}
  \def\si{\sigma} \def\ess{\text{\rm{ess}}}
\def\beg{\begin} \def\beq{\begin{equation}}  \def\F{\scr F}
\def\Ric{\text{\rm{Ric}}} \def\Hess{\text{\rm{Hess}}}
\def\e{\text{\rm{e}}} \def\ua{\underline a} \def\OO{\Omega}  \def\oo{\omega}
 \def\tt{\tilde} \def\Ric{\text{\rm{Ric}}}
\def\cut{\text{\rm{cut}}} \def\P{\mathbb P} \def\ifn{I_n(f^{\bigotimes n})}
\def\C{\scr C}   \def\G{\scr G}   \def\aaa{\mathbf{r}}     \def\r{r}
\def\gap{\text{\rm{gap}}} \def\prr{\pi_{{\bf m},\varrho}}  \def\r{\mathbf r}
\def\Z{\mathbb Z} \def\vrr{\varrho} \def\ll{\lambda}
\def\L{\scr L}\def\Tt{\tt} \def\TT{\tt}\def\II{\mathbb I}
\def\i{{\rm in}}\def\Sect{{\rm Sect}}  \def\H{\mathbb H}
\def\M{\scr M}\def\Q{\mathbb Q} \def\texto{\text{o}} \def\LL{\Lambda}
\def\Rank{{\rm Rank}} \def\B{\scr B} \def\i{{\rm i}} \def\HR{\hat{\R}^d}
\def\to{\rightarrow}\def\l{\ell}\def\iint{\int}
\def\EE{\scr E}\def\no{\nonumber}
\def\A{\scr A}\def\V{\mathbb V}\def\osc{{\rm osc}}
\def\BB{\scr B}\def\Ent{{\rm Ent}}\def\3{\triangle}
\def\U{\scr U}\def\8{\infty}\def\1{\lesssim}\def\HH{\mathrm{H}}

\maketitle

\begin{abstract}
In this paper, we are concerned with convergence rate of
Euler-Maruyama scheme for stochastic differential equations with
rough coefficients. The key contributions lie  in (i), by means of
  regularity of non-degenerate Kolmogrov equation, we investigate
convergence rate of Euler-Maruyama scheme for a class of stochastic
differential equations, which allow the drifts to be Dini-continuous
and unbounded; (ii) by the aid of regularization properties of
degenerate Kolmogrov equation, we discuss convergence rate of
Euler-Maruyama scheme for a range of degenerate stochastic
differential equations, where the drift is locally H\"older-Dini
continuous of order $\ff{2}{3}$ with respect to the first component,
and is merely Dini-continuous concerning the second component.

\end{abstract} \noindent
 {\bf AMS subject Classification:}\ 60H35 $\cdot$ 41A25 $\cdot$ 60H10 $\cdot$ 60C30  \\
\noindent {\bf Keywords:} Euler-Maruyama scheme $\cdot$ convergence
rate $\cdot$ H\"older-Dini continuity $\cdot$ degenerate stochastic
  differential equation $\cdot$ Kolmogorov equation 
 \vskip 2cm

\section{Introduction and Main Results}
It is well-known that convergence rate of Euler-Maruyama (EM) for
stochastic differential equations (SDEs) with regular coefficients
is one-half, see, e.g., \cite{Klo,Mao}. With regard to convergence
rate of EM scheme under various settings, we refer to, e.g.,
\cite{BY} for stochastic differential delay equations (SDDEs)  with
polynomial growth with respect to (w.r.t.) the delay variables,
\cite{GS} for SDDEs under local Lipschitz and also under
monotonicity condition, \cite{G98,HK,NT2} for SDEs with
discontinuous coefficients, and \cite{YM} for SDEs under
log-Lipschitz condition.

Recently, convergence rate of EM scheme for SDEs with irregular
coefficients has also gained much attention. For instance,
by  the Meyer--Tanaka formula, \cite{Y02} revealed convergence rate
in $L^1$-norm sense  for a range of  SDEs, where the drift term is
Lipschitzian and the diffusion term is H\"older continuous w.r.t.
spatial variable; Adopting the Yamada-Watanabe approximation
approach,  \cite{GR} extended \cite{Y02} to discuss strong
convergence rate  in $L^p$-norm sense; Using the Yamada-Watanabe
approximation trick and heat kernel estimate,  \cite{NT} studied
strong convergence rate in $L^1$-norm sense for a class of
non-degenerate SDEs, where the bounded drift term satisfies a weak
monotonicity and is of bounded variation w.r.t.  a Gaussian measure
and the diffusion term is H\"older continuous; Applying the Zvonkin
transformation, \cite{PT} discussed strong convergence rate in
$L^p$-norm sense  for SDEs with additive noise, where the  drift
coefficient is bounded and  H\"older continuous.

It is worth  pointing out that \cite{NT,PT} focused on convergence
rate of EM for SDEs with {\it H\"older continuous and bounded
drift}, which, nevertheless, rules out some interesting examples. On
the other hand, most of the existing literature on convergence rate
of EM scheme is concerned with {\it non-degenerate SDEs},  the
corresponding issue for {\it degenerate SDEs} is scarce. So, in this
work, our goal is to discuss convergence rate of EM method for SDEs
with rough coefficients, which may allow  SDEs  involved to be
degenerate.  For wellposedness of (path-dependent) SDEs with
singular coefficients, we refer to, e.g., \cite{B16, W,WZ,WZ1} for
more details.

Throughout the paper, the following notation will be used.  Let
$\|\cdot\|$ and $\|\cdot\|_{\mathrm{HS}}$ stand for the usual
operator norm and the Hilbert-Schmidt norm, respectively. Fix $T>0$
and   set $\|f\|_{T,\8}:=\sup_{t\in[0,T],x\in \R^m}\|f(t,x)\|$ for
an operator-valued map $f$ on $[0,T]\times \R^m$. Denote
$\mathbb{M}_{\rm non}^n$ by the collection of all nonsingular
$n\times n$-matrices and $\mathbb{M}_{\rm non}^{n,cc}$ by a closed
and convex subset of $\mathbb{M}_{\rm non}^n$.  Let $\mathscr{S}_0$
be the collection of all slowly varying functions
$\phi:\R_+\mapsto\R_+$  at zero in Karamata's sense (i.e.,
$\lim_{t\rightarrow0}\frac{\phi(\lambda t)}{\phi(t)}=1$ for any
$\ll>0$), which are bounded from $0$ and $\8$ on $[\vv,\8)$ for any
$\vv>0$. For more properties of slowly varying functions, we refer
to, e.g., Bingham et al. \cite{BGT}. Let $\mathscr{D}_0$ be the
family of Dini functions, i.e., \beg{equation*}\beg{split} \D_0:&=
\Big\{\phi\Big|\phi: \R_+\mapsto \R_+ \mbox{ is increasing and }
\int_0^1{\frac{\phi(s)}{s}\d s}<\infty\Big\}.
\end{split}\end{equation*}
A function $f:\R^m\mapsto\R^n$ is called Dini-continuity if there
exists $\phi\in\D_0$ such that $|f(x)-f(y)|\le\phi(|x-y|)$ for any
$x,y\in\R^m.$ Remark that every Dini-continuous function is
continuous and every Lipschitz continuous function is
Dini-continuous; Moreover, if $f$ is H\"older
continuous, 
then $f$ is Dini-continuous. Nevertheless, there are numerous
Dini-continuous functions, which are not H\"older continuous at all;
see, e.g., $\varphi(x)=(\log(c+x^{-1}))^{-(1+\dd)}, x>0$, for some
constants $\dd>0$ and $c\ge\e^{3+2\dd},$ and $\varphi(x)=0$ for $x=0.$ For
some sufficiently small $\vv\in(0,1)$, set
 \beg{equation*}\beg{split} \D:= \{\phi\in\D_0| \phi^{2} \text{ is
 concave}\}~~~\mbox{ and }~~~\D^\vv:= \{\phi\in\D|\phi^{2(1+\vv)} \text{ is
 concave} \}.
\end{split}\end{equation*}
Clearly, $\varphi$ constructed above belongs to $\D\cap\D^\vv.$ A
function $f:\R^m\mapsto\R^n$ is called H\"older-Dini continuity of
order $\aa\in[0,1)$ if
\begin{equation*}
|f(x)-f(y)|\le |x-y|^\alpha\phi(|x-y|),~~~~~|x-y|\le1
\end{equation*}
for some $\phi\in\D_0.$ For any measurable function
$\psi:(0,1]\mapsto\R_+$ and $f\in C(\R^n)$, let
\begin{equation*}
[f]_{\psi}=\sup_{|x-y|\le1}\frac{|f(x)-f(y)|}{\psi(|x-y|)},
~~\|f\|_\8=\sup_{x\in\R^n}|f(x)| ~\mbox{ and }~\interleave
f\interleave_\psi=[f]_{\psi}+\|f\|_\8.
\end{equation*}
For notational simplicity, we shall write $\R^{2n}$ instead of
$\R^n\times \R^n$. For any measurable functions
$\psi_1,\psi_2:(0,1]\mapsto\R_+$ and $f\in C(\R^{2n})$, let
\begin{equation*}
\begin{split}
[f]_{\psi_1,\8}&=\sup_{x^{(2)}\in\R^n}[f(\cdot,x^{(2)})]_{\psi_1},~~~~[f]_{\8,\psi_2}=\sup_{x^{(1)}\in\R^n}[f(x^{(1)},\cdot)]_{\psi_2},\\
[f]_{\psi_1,\psi_2}&=[f]_{\psi_1,\8}+[f]_{\8,\psi_2},~~~~~\interleave
f\interleave_{\psi_1,\psi_2}=[f]_{\psi_1,\psi_2}+\|f\|_\8,\\
 \interleave f\interleave_{\psi_1,\8}&=[f]_{\psi_1,\8}+\|f\|_\8,~~~~~~~\interleave
 f\interleave_{\8,\psi_2}=[f]_{\8,\psi_2}+\|f\|_\8.
\end{split}
\end{equation*}
Write the gradient operator on $\R^{2n}$ as
$\nn=(\nn^{(1)},\nn^{(2)})$, where $\nn^{(1)}$ and $ \nn^{(2)}$
stand for the gradient operators for the first and the second
components, respectively.

Before proceeding further, a few words about the notation are in
order. Generic constants will be denoted by $c$; we use the
shorthand notation $a\1b$ to mean $a\le cb$. If the constant $c$
depends on a parameter $p$, we shall also write $c_p$ and $a\1_p b$.

\subsection{Non-degenerate SDEs with Bounded
Coefficients}\label{sec1.1} In this subsection, we consider an SDE
on $(\R^n,\<\cdot,\cdot\>, |\cdot|)$
 \beq\label{1.1} \d X_t=b_t(X_t)\d t+\sigma_t(X_t)\d
W_t, ~~~t>0,~~~X_0=x,
\end{equation}
where $b: \R_+\times\R^n\mapsto\R^n$, $\si:
\R_+\times\R^n\mapsto\R^n\otimes \R^n$, and $(W_t)_{t\geq 0}$ is an
$n$-dimensional Brownian motion on a complete filtered probability
space $(\OO, \F, (\F_t)_{t\ge 0}, \P)$.

\smallskip
With regard to \eqref{1.1},
 we suppose that  there exists $\phi\in\D$ such that for any
 $s,t\in[0,T]$ and $x,y\in\R^n,$
 \beg{enumerate}
\item[\bf{(A1)}]   $\si_t\in C^2(\R^n;\R^n\otimes \R^n)$,
$\si_t\in\mathbb{M}_{\rm non}^n$,  and
\begin{equation}\label{bao3} \|b\|_{T,\8}+\sum_{i=0}^{2}\|\nabla^{i}
\sigma\|_{T,\infty}+\|\nn\sigma^{-1}\|_{T,\infty}+\|\sigma^{-1}\|_{T,\infty}<\8,
\end{equation}
where $\nn^i$ means the $i$-th order gradient operator;

\item[\bf{(A2)}] (Regularity of $b$ w.r.t.   spatial variables)
\begin{equation*}
|b_t(x)-b_t(y)|\leq \phi(|x-y|);
\end{equation*}
\item[\bf{(A3)}] (Regularity of $b$ and $\si$ w.r.t.   time variables)
\begin{equation*}
|b_{s}(x)-b_{t}(x)|+\|\sigma_{s}(x)-\sigma_{t}(x)\|_{\mathrm{HS}}\leq
\phi(|s-t|). \end{equation*}

\end{enumerate}
Under {\bf(A1)} and {\bf(A2)}, \eqref{1.1} admits a unique
non-explosive strong solution $(X_t)_{t\in[0,T]}$; see, e.g.,
\cite[Theorem 1.1]{W}.

\smallskip

Without loss of generality, we take an integer  $N>0$ sufficiently large such
that the stepsize $\delta:=T/N\in(0,1)$. 
The continuous-time EM scheme corresponding to
  \eqref{1.1} is
\begin{equation}\label{5eq20}
\d Y_t=b_{t_\delta}( Y_{t_\delta})\d t+\sigma_{t_\delta} (
Y_{t_\delta})\d W_t,~~~t>0,~~~Y_0=X_0=x.
\end{equation}
Herein, $t_\delta:=\lfloor t/\delta\rfloor\delta$ with $\lfloor
t/\delta\rfloor$ being the integer part of $t/\delta$.

\smallskip

The first contribution in this paper is stated as follows.

\beg{thm}\label{T1.1} {\rm Under {\bf (A1)}-{\bf (A3)},
\begin{equation*}
 \E\Big(\sup_{0\le t\le
T}|X_t-Y_t|^2\Big)\1_T\phi(C_T\ss\dd)^2
\end{equation*}
for some constant $C_T\ge1$.
 }
\end{thm}

\begin{rem}
{\rm In Theorem \ref{T1.1}, taking $\phi(x)=x^\beta$ for $x>0$ and
$\beta\in(0,1/2],$ we arrive at
\begin{equation*}
 \E\Big(\sup_{0\le t\le
T}|X_t-Y_t|^2\Big) \1_T\dd^\beta,
\end{equation*}
which covers   \cite[Theorem 2.13]{PT} with $\beta\in(0,1/2]$
therein. On the other hand, by going carefully through the argument
of Theorem \ref{T1.1}, we can allow the H\"older exponent
$\bb\in(1/2,1)$ whenever we consider the error bound of
$\E\Big(\sup_{0\le t\le T}|X_t-Y_t|^p\Big)$ for any $p\in[1,1/\bb]$.
Moreover, choosing $\phi(x)=x,x\ge0,$ and inspecting closely the
argument of Theorem \ref{T1.1}, one has
\begin{equation*}
 \E\Big(\sup_{0\le t\le
T}|X_t-Y_t|^2\Big) \1_T\dd,
\end{equation*}
which reduces to the classical result on  strong convergence of EM
scheme for SDEs with regular coefficients, see, e.g., \cite{Klo}.

}
\end{rem}

\subsection{Non-degenerate SDEs with Unbounded
Coefficients}\label{sec1.2}
In Theorem \ref{T1.1},   the
coefficients are uniformly bounded, and that the drift term $b$
satisfies the global Dini-continuous condition (see {\bf(A2)}
above), which
 seems to be a little bit stringent. Therefore, it is quite
natural to replace uniform boundedness by local boundedness and
global Dini continuity by local Dini continuity, respectively.

\smallskip

In lieu of {\bf (A1)}-{\bf (A3)}, concerning  \eqref{1.1} we assume
that for any $s,t\in[0,T]$ and $k\ge1$,
 \beg{enumerate}
\item[{\bf
(A1')}] $\si_t\in C^2(\R^n;\R^n\otimes \R^n)$,
$\si_t\in\mathbb{M}_{\rm non}^n$, and
\beg{equation*}\beg{split} |b_t(x)|+\sum_{i=0}^{2}\|\nabla^{i}
\sigma_t(x)\|_{\mathrm{HS}}+\|\nn\sigma_t^{-1}(x)\|_{\mathrm{HS}}+\|\sigma_t^{-1}(x)\|_{\mathrm{HS}}\leq
K_T (1+|x|),~~~x\in\R^n
\end{split}\end{equation*}
for some constant $K_T>0$;
\item[{\bf
(A2')}] (Regularity of $b$ w.r.t.  spatial variables)  There exists
$\phi_k\in\D$ such that
\begin{equation*}|b_{t}(x)-b_{t}(y)|\leq \phi_k(|x-y|),~~~|x|\vee|y|\leq k;
\end{equation*}
\item[{\bf
(A3')}](Regularity of $b$ and $\si$ w.r.t.   time variables) For
$\phi_k\in\D$  such that {\bf (A2')},
\begin{equation*}
|b_{s}(x)-b_{t}(x)|+\|\sigma_{s}(x)-\sigma_{t}(x)\|_{\rm HS}\leq
\phi_k(|s-t|),~~~~~|x| \leq k.
\end{equation*}
\end{enumerate}

By the cut-off approach, Theorem \ref{T1.1} can be generalized to
cover SDEs with local Dini-continuous coefficients, which is
presented as below.

 \beg{thm}\label{T1.2} {\rm Under {\bf (A1')}-{\bf (A3')},
\begin{equation}\label{h6}
\lim_{\dd\rightarrow0} \E\Big(\sup_{0\le t\le T}|X_t-Y_t|^2\Big)=0.
\end{equation}
Moreover, if  $\phi_k(s)=\e^{\e^{c_0k^4}}s^{\alpha}, s\ge0,$ for
some $\alpha\in(0,1/2]$ and   $c_0>0$, then
\begin{equation}\label{z7}
\E\Big(\sup_{0\le t\le T}|X_t-Y_t|^2\Big)\1\inf_{\vv\in(0,1)}\Big\{
(\log\log(\dd^{-\aa\vv}))^{-\frac{1}{4}}+\dd^{\alpha(1-\vv)}\Big\}.
\end{equation}
}
\end{thm}

\begin{rem}
{\rm For the case of bounded and H\"older continuous drift $b\in
L^1(\R)$ and bounded diffusion coefficient $\si$, \cite[Theorem
2.6]{NT1} studied convergence rate of EM scheme for a class of
 scalar non-degenerate SDEs.  By a cut-off approach, \cite[Theorem
2.7]{NT1} extended \cite[Theorem 2.6]{NT1} to the case that  the
drift and diffusion terms are bounded. While, Theorem \ref{T1.2}
reveals convergence rate of EM scheme for SDEs with rough
coefficients, which allows the drift term to be unbounded and
H\"older continuous. }
\end{rem}

\subsection{Degenerate SDEs}\label{sec1.3}
 So far, most of the existing literature on convergence of EM scheme for
 SDEs with irregular coefficients is concerned with  non-degenerate
 SDEs; see, e.g., \cite{NT,NT1,PT} for SDEs driven by Brownian motions, and \cite{MX,PT} for SDEs driven by jump processes. The issue for the setup of degenerate SDEs has not yet been
considered to date to the best of our knowledge. Nevertheless, in
this subsection, we make an attempt to discuss the topic for
 degenerate SDEs with rough coefficients.

 Consider the following degenerate SDE on $\R^{2n}$
\begin{equation}\label{b3}
\begin{cases}
\d X_t^{(1)}=b^{(1)}_t(X_t^{(1)},X_t^{(2)})\d t,~~~~~~~~~~~~~~~~~~~~~~~~~~~~~~~~~X_0^{(1)}=x^{(1)}\in\R^n,\\
\d X_t^{(2)}=b^{(2)}_t(X_t^{(1)},X_t^{(2)})\d
t+\si_t(X_t^{(1)},X_t^{(2)})\d W_t,~~~~~~~X_0^{(2)}=x^{(2)}\in\R^n,
\end{cases}
\end{equation}
where $b^{(1)}_t,b^{(2)}_t:\R^{2n}\mapsto\R^{n}$, $\si_t: \R^{2n}
\mapsto\R^n\otimes\R^n$,  and $(W_t)_{t\ge0}$ is an $n$-dimensional
Brownian motion defined on a filtered probability space $(\OO, \F,
(\F_t)_{t\ge 0}, \P)$.
\eqref{b3} is also called a stochastic Hamiltonian system, which has
been
 investigated extensively in \cite{GW,WZ13,Zhang} on Bismut formulae,  in
\cite{MSH} on ergodicity,  in \cite{W14} on hypercontractivity, and
in \cite{C12,WZ,WZ1} on wellposedness, to name a few.

For any $x=(x^{(1)},x^{(2)}),y=(y^{(1)},y^{(2)})\in\R^{2n}$ and
$t\in[0,T]$,  assume that there exists
$\phi\in\mathscr{D}^\vv\cap\mathscr{S}_0$  such that
 \beg{enumerate}
\item[{\bf(C1)}] (Hypoellipticity) $\nn^{(2)}b_t^{(1)} \in\mathbb{M}_{\rm
non}^{n,cc}$, $\si_t(x)\in \mathbb{M}_{\rm non}^{n }$, and
\begin{equation*}
\|b^{(1)}\|_{T,\8}+\|b^{(2)}\|_{T,\8}+
\|\nn^{(2)}b^{(1)}\|_{T,\8}+\sum_{i=0}^2\|\nn^i\si\|_{T,\8}+\|\si^{-1}\|_{T,\8}<\8;
\end{equation*}
\item[{\bf(C2)}]  (Regularity of $b^{(1)}$ w.r.t. spatial variables)
\begin{equation*}
\begin{split}
&|b_t^{(1)}(x)-b_t^{(1)}(y)|\le
|x^{(1)}-y^{(1)}|^{\ff{2}{3}}\phi(|x^{(1)}-y^{(1)}|)~~~~~~\mbox{ if
}x^{(2)}=y^{(2)},\\
&\|\nn^{(2)}b_t^{(1)}(x)-\nn^{(2)}b_t^{(1)}(y)\|_{\mathrm{HS}}\le
 \phi(|x^{(2)}-y^{(2)}|)~~~~~~~\mbox{ if }x^{(1)}=y^{(1)};
\end{split}
\end{equation*}

\item[{\bf(C3)}]  (Regularity of $b^{(2)}$ w.r.t. spatial variables)
\begin{equation*}
|b_t^{(2)}(x)-b_t^{(2)}(y)|\le
|x^{(1)}-y^{(1)}|^{\ff{2}{3}}\phi(|x^{(1)}-y^{(1)}|)+
\phi^{\ff{7}{2}}(|x^{(2)}-y^{(2)}|);
\end{equation*}
\item[{\bf(C4)}]  (Regularity of $b^{(1)}, b^{(2)} $ and $\si$ w.r.t. time
variables)
\begin{equation*}
|b_t^{(1)}(x)-b_s^{(1)}(x)| +|b_t^{(2)}(x)-b_s^{(2)}(x)|
+\|\si_t(x)-\si_s(x)\|_{\mathrm{HS}}\le \phi(|t-s|).
\end{equation*}

\end{enumerate}

 Observe from {\bf(C2)} and {\bf(C3)} that
$b^{(1)}(\cdot,x^{(2)})$ and $b^{(2)}(\cdot,x^{(2)})$ with fixed
$x^{(2)}$ are locally H\"older-Dini continuous of order
$\frac{2}{3}$, and $\nn^{(2)}b^{(1)}(x^{(1)},\cdot)$ and
$b^{(2)}(x^{(1)},\cdot)$ with fixed $x^{(1)}$ are merely Dini
continuous. According to \cite[Theorem 1.2]{WZ},  \eqref{b3} admits
a unique strong solution under the assumptions {\bf(C1)}-{\bf(C3)}.
In fact, \eqref{b3} is wellposed under {\bf(C1)}-{\bf(C3)} with
$\phi\in\D_0\cap\mathscr{S}_0$ in lieu of
$\phi\in\D^\vv\cap\mathscr{S}_0$. Nevertheless, the requirement
$\phi\in\D^\vv\cap\mathscr{S}_0$ is imposed in order to reveal the
order of convergence for the EM scheme below.

The  continuous-time EM scheme associated with \eqref{b3} is as
follows:
\begin{equation}\label{s0}
\begin{cases}
\d Y_t^{(1)}=b^{(1)}_{t_\dd}(Y_{t_\dd}^{(1)},Y_{t_\dd}^{(2)})\d t,~~~~~~~~~~~~~~~~~~~~~~~~~~~~~~~~~X_0^{(1)}=x^{(1)}\in\R^n,\\
\d Y_t^{(2)}=b^{(2)}_{t_\dd}(Y_{t_\dd}^{(1)},Y_{t_\dd}^{(2)})\d
t+\si_{t_\dd}(Y_{t_\dd}^{(1)},Y_{t_\dd}^{(2)})\d
W_t,~~~~~~X_0^{(2)}=x^{(2)}\in\R^n,
\end{cases}
\end{equation}
where   $t_\dd$ is defined as in \eqref{5eq20}.

Another contribution in this paper reads as below.
\begin{thm}\label{th1}
{\rm Under {\bf(C1)}-{\bf(C4)},
\begin{equation*}
 \E\Big(\sup_{0\le t\le
T}|X_t-Y_t|^2\Big)\1_T\phi(C_T\ss\dd)^2
\end{equation*}
for some constant $C_T\ge1$, in which
\[
X_t:=\Spvek[c]{X_t^{(1)};X_t^{(2)}} \mbox{ and }
Y_t:=\Spvek[c]{Y_t^{(1)};Y_t^{(2)}}.
\]

}
\end{thm}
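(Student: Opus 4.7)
The plan is to mimic the Zvonkin-transformation strategy used for Theorem \ref{T1.1}, with the non-degenerate Kolmogorov PDE regularity replaced by the degenerate (hypoelliptic) one associated with \eqref{b3}. For $\ll>0$ large, I would consider the vector-valued backward Kolmogorov equation
\beg{equation*}
(\pp_t+\L_t)u_t=\ll u_t-b_t,\qquad u_T=0,
\end{equation*}
where $\L_t$ is the generator of \eqref{b3} and $b_t=(b_t^{(1)},b_t^{(2)})$. Under {\bf(C1)}--{\bf(C4)}, the anisotropic Schauder estimates for the hypoelliptic Kolmogorov operator (in the spirit of \cite{WZ,WZ1}) should yield a $C^1$ solution $u$ with $\|\nn u\|_{T,\8}\to0$ as $\ll\to\8$, and with $\nn u$ inheriting the anisotropic modulus of $b$: $\ff{2}{3}$-H\"older-Dini in $x^{(1)}$ and Dini in $x^{(2)}$. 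Choosing $\ll$ so that $\|\nn u\|_{T,\8}\le 1/2$, the Zvonkin map $\Phi_t(x):=x+u_t(x)$ is a near-identity bi-Lipschitz diffeomorphism of $\R^{2n}$, and hence $|X_t-Y_t|\1|\Phi_t(X_t)-\Phi_t(Y_t)|$.

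Applying It\^o's formula to $u_t(X_t)$ and $u_t(Y_t)$ and using the PDE satisfied by $u$, the singular drift cancels in the SDE for $\Phi_t(X_t)$, giving
\beg{equation*}
\d\Phi_t(X_t)=\ll u_t(X_t)\,\d t+[(I+\nn^{(2)}u_t)\si_t](X_t)\,\d W_t,
\end{equation*}
with the noise acting only on the second block of $\R^{2n}$. The analogous equation for $\Phi_t(Y_t)$ has the same smooth drift $\ll u_t(Y_t)$ plus a remainder $R_t$ collecting (i) the first-order discrepancy $\nn u_t(Y_t)\cdot[b_{t_\dd}(Y_{t_\dd})-b_t(Y_t)]$ and (ii) the second-order It\^o correction involving $\si_{t_\dd}(Y_{t_\dd})\si_{t_\dd}(Y_{t_\dd})^\ast-\si_t(Y_t)\si_t(Y_t)^\ast$ paired against $\nn^{(2),(2)}u_t(Y_t)$. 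Subtracting and applying Burkholder-Davis-Gundy, the near-Lipschitz contributions from $\ll u$ and $\nn^{(2)}u\,\si$ feed the Gronwall loop via $\int_0^t\E\sup_{r\le s}|X_r-Y_r|^2\,\d s$, while $R_t$ supplies the time-discretization error.

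The delicate estimate is the bound on $R_t$. The time part is controlled by $\phi(\dd)^2$ via {\bf(C4)}. For the spatial part, the crucial observation is that the first component of \eqref{s0} carries no Brownian increment, so $|Y_t^{(1)}-Y_{t_\dd}^{(1)}|\1\dd$ pathwise, whereas $\E|Y_t^{(2)}-Y_{t_\dd}^{(2)}|^2\1\dd$. Splitting along the two blocks and using {\bf(C2)}--{\bf(C3)} together with $\nn^{(2)}b^{(1)}\in L^\8$ from {\bf(C1)} produces spatial errors dominated, after squaring, by
\beg{equation*}
\dd^{\ff{4}{3}}\phi(\dd)^2+\dd+\phi(\ss\dd)^{7}+\phi(\dd)^2\1\phi(C_T\ss\dd)^2,
\end{equation*}
where the final bound uses $\phi\in\D^\vv\cap\mathscr{S}_0$: the slow variation matches $\dd^{\ff{2}{3}}$ against $(\ss\dd)^{\ff{4}{3}}$ up to asymptotic constants, and the seventh power coming from the exponent $\ff{7}{2}$ in {\bf(C3)} is absorbed via concavity of $\phi^{2(1+\vv)}$ together with $\phi(\ss\dd)\to0$. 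Collecting these pieces and invoking Jensen's inequality against the concave $\phi^2$ yields
\beg{equation*}
\E\sup_{s\le t}|X_s-Y_s|^2\1_T\phi(C_T\ss\dd)^2+\int_0^t\phi\bigl(\ss{\E\sup_{r\le s}|X_r-Y_r|^2}\bigr)^2\d s,
\end{equation*}
and a Bihari-type Gronwall argument for the concave Dini function $\phi^2$ closes the proof.

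The main obstacle is really the first step: establishing the anisotropic H\"older-Dini regularity of $\nn u$ for the degenerate Kolmogorov operator with only the rough inputs {\bf(C2)}--{\bf(C3)}. The exponent $\ff{2}{3}$ reflects the H\"ormander scaling $(\dd,\ss\dd)$ intrinsic to the kinetic-type generator $\L$, and the stronger class $\D^\vv$ is imposed precisely so that the resulting PDE estimates produce the clean error bounds above. Once the PDE side is in hand, the remaining EM-scheme bookkeeping parallels the non-degenerate case of Theorem \ref{T1.1}, modulo the extra care required by the asymmetric block structure of \eqref{b3}.
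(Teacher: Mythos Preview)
Your strategy---Zvonkin transform via the degenerate Kolmogorov equation \eqref{song}, It\^o's formula for $x+u^\ll_t(x)$, and a Gronwall closure---is exactly the paper's. The gap is at the closing step. You only ask the PDE to deliver $u^\ll\in C^1$ with an anisotropic H\"older--Dini modulus on $\nn u^\ll$; with that alone, the martingale difference $\bigl((\nn^{(2)}u^\ll_s)(X_s)-(\nn^{(2)}u^\ll_s)(Y_s)\bigr)\si_s(X_s)$ contributes $\phi(|X_s-Y_s|)^2$ to the quadratic variation, and after Jensen you land (as you wrote) on
\begin{equation*}
f(t)\1_T\phi(C_T\ss\dd)^2+\int_0^t\phi\bigl(\ss{f(s)}\bigr)^2\,\d s,\qquad f(t):=\E\sup_{s\le t}|X_s-Y_s|^2.
\end{equation*}
A Bihari argument does \emph{not} close this to $f(T)\1_T\phi(C_T\ss\dd)^2$. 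Take the archetype $\phi(s)=s^\aa$ with $\aa\in(0,\ff{1}{2}]$: then $g(v):=\phi(\ss v)^2=v^\aa$ fails the Osgood condition $\int_0^1\d v/g(v)=\8$, so as $A:=\phi(C_T\ss\dd)^2\to0$ the Bihari bound $G^{-1}(G(A)+T)$ stays bounded away from zero. Neither $\phi\in\mathscr{S}_0$ nor concavity of $\phi^{2(1+\vv)}$ repairs this. (There is also a smaller inconsistency: your remainder (ii) already pairs the $\si$-discrepancy against $\nn^{(2)}\nn^{(2)}u^\ll$, which a mere $C^1$ solution does not supply.)

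What the paper actually uses is that the degenerate Schauder estimate (Lemma~\ref{regu}, i.e.\ \cite[Theorem~2.3]{WZ}) delivers far more than $C^1$: for $\ll\ge\ll_0$,
\begin{equation*}
\|\nn u^\ll\|_{T,\8}+\|\nn^{(2)}\nn^{(2)}u^\ll\|_{T,\8}+\|\nn\nn^{(2)}u^\ll\|_{T,\8}\le\ff{1}{2}.
\end{equation*}
The bound on $\nn^{(2)}\nn^{(2)}u^\ll$ handles the It\^o correction; the bound on $\nn\nn^{(2)}u^\ll$ makes $\nn^{(2)}u^\ll$ globally \emph{Lipschitz}, so the martingale difference contributes $\int_0^t\E|X_s-Y_s|^2\,\d s$ rather than a nonlinear $\phi$-term. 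Since $\si$ is Lipschitz by {\bf(C1)}, every $X$--$Y$ comparison is then linear, the inequality becomes $f(t)\1_T\phi(C_T\ss\dd)^2+\int_0^t f(s)\,\d s$, and ordinary Gronwall finishes. In short: upgrade the PDE input from a modulus on $\nn u^\ll$ to an $L^\8$ bound on $\nn\nn^{(2)}u^\ll$, and replace Bihari by linear Gronwall.
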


\begin{rem}
{\rm By applying the cut-off approach and refining the argument of
\cite[Theorem 2.3]{WZ}, the boundedness of coefficients can be
removed. We herein do not go into details since the corresponding
trick is quite similar to the proof of Theorem \ref{T1.2}. }
\end{rem}

The outline of this paper is organized as follows: In Section
\ref{sec2}, we  elaborate   regularity of  nondegenerate Kolmogorov
equation, which plays an important role in dealing with convergence
rate of EM scheme for nondegenerate SDEs with rough  and unbounded
coefficients; In Sections \ref{sec3}, \ref{sec4} and \ref{sec5}, we
complete the proofs of Theorems \ref{T1.1}, \ref{T1.2} and
\ref{th1}, respectively.

\section{Regularity of  Non-degenerate Kolmogorov Equation}\label{sec2}
Let $(e_i)_{i\ge1}$ be an  orthogonal basis of $\R^n.$ For any
$\lambda
>0$, consider the following $\R^n$-valued parabolic equation: \beq\label{2.1}\beg{split}
\partial_tu_t^\ll+L_tu_t^\ll+b_t+\nabla_{b_t}u_t^\ll=\lambda
u_t^\ll, \quad u_T^\ll={\bf0_n},
\end{split}\end{equation}
where  ${\bf0_n}$ is the zero vector in $\R^n$ and \beg{equation*}
L_{t}:= \frac{1}{2}\sum_{i,j}{\langle
(\si_t\si_t^*)(\cdot)e_{i},e_{j}\rangle}\nabla_{e_{i}}\nabla_{e_{j}}
 \end{equation*}
with $\si_t^*$ standing for the transpose of $\si_t.$ By solving the
corresponding coupled forward-backward SDE, one has
\begin{equation}\label{b2}
u_s^\ll=\int_s^T\e^{-\ll(
t-s)}P_{s,t}^0\{b_t+\nabla_{b_t}u_t^\ll\}\d t,
\end{equation}
where the semigroup $(P_{s,t}^0)_{0\le s\le t}$ is generated by
$(Z_t^{s,x})_{0\le s\le t}$ which solves the SDE below
\begin{equation}\label{b0}
\d Z_t^{s,x}=\sigma(Z_t^{s,x})\d W_t, ~~~t>s,~~~Z_s^{s,x}=x.
\end{equation}
For notational simplicity,  let
\begin{equation}\label{z0}
\Lambda_{T,\si}=\e^{\ff{T}{2}\|\nabla\sigma\|_{T,\infty}^2}\|\si^{-1}\|_{T,\8}£¬
\end{equation}
and
\begin{equation}\label{z2}
\begin{split}
\tt\Lambda_{T,\si}&=48\e^{288\,T^2\|\nabla\sigma\|_{T,\infty}^4}\Big\{6\ss2\e^{T\|\nabla\sigma\|_{T,\infty}^2}\|\si^{-1}\|_{T,\8}^4+T\|\nn\si^{-1}\|_{T,\8}^2\\
 &\quad+2T^2\|\nabla^2\sigma\|_{T,\infty}^2\|\si^{-1}\|_{T,\8}^2
\e^{2T\|\nabla\sigma\|_{T,\infty}^2} \Big\}.
\end{split}
\end{equation}
Moreover, set
\begin{equation}\label{z3}
\Upsilon_{T,\si}:=\ss{\tt\Lambda_{T,\si}}\Big\{3+2\|b\|_{T,\8}+
 28\Big(\Lambda_{T,\si}+
 \ss{\tt\Lambda_{T,\si}}\Big)\|b\|_{T,\8}^2\Big\}.
\end{equation}

The  lemma below plays a crucial role in investigating our numerical schemes.
\begin{lem}\label{L2.1}
{\rm Under{\bf (A1)} and {\bf (A2)}, for any $ \lambda\ge
9\pi\Lambda_{T,\si}^2\|b\|_{T,\8}^2+4(\|b\|_{T,\8}+\Lambda_{T,\si})^2,
$
\begin{enumerate}
\item[\bf{(i)}]  \eqref{2.1} (i.e., \eqref{b2}) enjoys a unique
strong solution $u^\lambda\in C([0,T];C_b^1(\R^n;\R^n)$;

\item[\bf{(ii)}]
$\|\nn u^\lambda\|_{T,\8}\le \frac{1}{2}$;

\item[\bf{(iii)}]
$ \|\nn^2
u^\lambda\|_{T,\8}\le\Upsilon_{T,\si}\int_0^T\frac{\e^{-\lambda t}
}{t}\tt\phi(\|\si\|_{T,\8}\ss t )\d t, $ where $
\tt\phi(s):=\ss{\phi^2(s)+s},\,s\ge0. $

\end{enumerate}

 }
\end{lem}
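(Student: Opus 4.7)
The plan is to realize $u^\ll$ as the unique fixed point, in the Banach space $\mathcal{X}:=C([0,T];C_b^1(\R^n;\R^n))$ with norm $\|u\|_*:=\|u\|_{T,\8}+\|\nn u\|_{T,\8}$, of the map
\[
\Phi(u)_s := \int_s^T\e^{-\ll(t-s)}P^0_{s,t}\{b_t+\nn_{b_t}u_t\}\,\d t,
\]
and then to read (ii) and (iii) off this representation directly. The cornerstone is a pair of Bismut--Elworthy--Li derivative estimates for the semigroup $(P^0_{s,t})$ of \eqref{b0}: under {\bf(A1)} one gets
\[
|\nn P^0_{s,t}f(x)|\le \frac{\Lambda_{T,\si}}{\ss{t-s}}\|f\|_\8,\qquad |\nn^2 P^0_{s,t}f(x)|\le \frac{\ss{\tt\Lambda_{T,\si}}}{t-s}\,\omega_f(\|\si\|_{T,\8}\ss{t-s}),
\]
where $\omega_f$ is a modulus of continuity of $f$. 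The first is the standard Bismut identity applied to the non-degenerate SDE \eqref{b0}; the second comes from iterating Bismut after subtracting $f(x)$ (so that $P^0_{s,t}\mathbf{1}=\mathbf{1}$ kills the constant), applying Cauchy--Schwarz to the Malliavin weight, and invoking $\E|Z^{s,x}_t-x|^2\le \|\si\|_{T,\8}^2(t-s)$ together with the concavity of $\phi^2$ provided by $\phi\in\D$.

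For (i)--(ii), the first-order bound and $\int_s^T\e^{-\ll(t-s)}/\ss{t-s}\,\d t\le \ss{\pi/\ll}$ yield
\[
\|\nn\Phi(u)\|_{T,\8}\le \ss{\pi/\ll}\,\Lambda_{T,\si}\|b\|_{T,\8}(1+\|\nn u\|_{T,\8}),\quad \|\Phi(u)\|_{T,\8}\le\ll^{-1}\|b\|_{T,\8}(1+\|\nn u\|_{T,\8}),
\]
so the closed ball $\{\|\nn u\|_{T,\8}\le 1/2\}$ is $\Phi$-invariant whenever $\ss{\pi/\ll}\,\Lambda_{T,\si}\|b\|_{T,\8}\le 1/3$, i.e.\ $\ll\ge 9\pi\Lambda_{T,\si}^2\|b\|_{T,\8}^2$. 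Contractivity in $\|\cdot\|_*$ follows from applying the same bounds to the linear difference $\Phi(u)-\Phi(v)=\int\e^{-\ll(t-s)}P^0_{s,t}(\nn_{b_t}(u_t-v_t))\,\d t$, and is forced by the extra $4(\|b\|_{T,\8}+\Lambda_{T,\si})^2$ term in the hypothesis on $\ll$. Banach's fixed-point theorem then delivers (i), while the ball invariance is exactly (ii).

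For (iii), differentiating the fixed-point identity twice gives
\[
\nn^2 u^\ll_s=\int_s^T\e^{-\ll(t-s)}\nn^2 P^0_{s,t}\{b_t+\nn_{b_t}u^\ll_t\}\,\d t.
\]
By {\bf(A2)} and (ii) the integrand $g_t:=b_t+\nn_{b_t}u^\ll_t$ has modulus bounded by $\phi(r)(1+\|\nn u^\ll\|_{T,\8})+\|b\|_{T,\8}\|\nn^2 u^\ll\|_{T,\8}\,r$; squaring and averaging $g_t(Z^{s,x}_t)-g_t(x)$ against the heat kernel, the concavity of $\phi^2$ and $\E|Z^{s,x}_t-x|^2\le \|\si\|_{T,\8}^2(t-s)$ produce an $L^2$-modulus of the form $c\,\tt\phi(\|\si\|_{T,\8}\ss{t-s})$, since the Lipschitz piece contributes a linear $\|\si\|_{T,\8}\ss{t-s}$ under the square root. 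Substituting into the second-order estimate and integrating gives a self-improving inequality $\|\nn^2u^\ll\|_{T,\8}\le A+B(\ll)\|\nn^2u^\ll\|_{T,\8}$ with $A\1 \int_0^T(\e^{-\ll t}/t)\tt\phi(\|\si\|_{T,\8}\ss t)\,\d t$ and $B(\ll)\to 0$ as $\ll\to\8$; the requisite smallness of $B(\ll)$ is absorbed into the stated lower bound on $\ll$, and solving yields (iii) with $\Upsilon_{T,\si}$ assembled from $\|b\|_{T,\8}$, $\Lambda_{T,\si}$ and $\ss{\tt\Lambda_{T,\si}}$. The main obstacle is precisely this second-order kernel estimate with modulus $\tt\phi$: achieving the rate $(t-s)^{-1}\tt\phi(\cdot)$ rather than the brute-force $(t-s)^{-3/2}$ requires the $f(x)$-subtraction trick in the iterated Bismut formula together with $\phi^2$-concavity, which is exactly why the class $\D$ (and not merely $\D_0$) is imposed.
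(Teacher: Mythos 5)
Your treatment of (i) and (ii) is essentially the paper's: Banach fixed point in $C([0,T];C_b^1)$ with the first-order Bismut estimate $|\nn P^0_{s,t}f|\le\Lambda_{T,\si}\|f\|_\8/\ss{t-s}$, and the ball $\{\|\nn u\|_{T,\8}\le 1/2\}$ is invariant precisely when $\ll\ge 9\pi\Lambda_{T,\si}^2\|b\|_{T,\8}^2$. The second-order Bismut bound $|\nn^2 P^0_{s,t}f|\le\ss{\tt\Lambda_{T,\si}}\,(t-s)^{-1}\phi(\|\si\|_{T,\8}\ss{t-s})$ (after subtracting $f(x)$ and using concavity of $\phi^2$ with Jensen and It\^o isometry) is also exactly what the paper proves, via the Markov property at the midpoint $(t+s)/2$.

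The gap is in how you close (iii). You control the modulus of $g_t=b_t+\nn_{b_t}u^\ll_t$ by writing $\|\nn u^\ll_t(x)-\nn u^\ll_t(y)\|\le\|\nn^2u^\ll\|_{T,\8}|x-y|$, which makes the estimate circular and forces the bootstrap $\|\nn^2u^\ll\|\le A+B(\ll)\|\nn^2u^\ll\|$. Three problems: (a) you must know $\|\nn^2u^\ll\|_{T,\8}<\8$ \emph{a priori}, which does not follow from (i) since the fixed point is only constructed in $C_b^1$; (b) the quantity $B(\ll)$ is of size $\ss{\tt\Lambda_{T,\si}}\,\|b\|_{T,\8}\|\si\|_{T,\8}\ss{\pi/\ll}$, so forcing $B(\ll)<1$ requires $\ll$ large compared to $\tt\Lambda_{T,\si}$ and $\|\si\|_{T,\8}$ --- parameters that do \emph{not} appear in the stated threshold $\ll\ge 9\pi\Lambda_{T,\si}^2\|b\|_{T,\8}^2+4(\|b\|_{T,\8}+\Lambda_{T,\si})^2$, so the hypothesis of the lemma does not guarantee the bootstrap closes; and (c) even where it closes, you would get $\Upsilon_{T,\si}/(1-B(\ll))$, not the clean $\Upsilon_{T,\si}$ claimed. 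The paper sidesteps all of this: it never invokes $\|\nn^2u^\ll\|$ on the right-hand side, but instead bounds $\|\nn u^\ll_t(x)-\nn u^\ll_t(y)\|$ by a log-Lipschitz modulus $\1(\Lambda_{T,\si}+\ss{\tt\Lambda_{T,\si}})\|b\|_{T,\8}\,|x-y|\log(\e+|x-y|^{-1})$ obtained (via \cite[Lemma 2.2(1)]{W}) directly from the two Bismut estimates and $\|f_t\|_\8\1\|b\|_{T,\8}$, then absorbs that into $\ss{|x-y|}$ on $\{|x-y|\le1\}$. This yields the modulus $\tt\phi(r)=\ss{\phi^2(r)+r}$ for $f_t$ in one pass, and feeding it into the second-order Bismut estimate gives the stated bound with $\Upsilon_{T,\si}$ assembled exactly as in \eqref{z3}. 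To repair your argument you would need to either import that log-Lipschitz interpolation for $\nn u^\ll$ or strengthen the lower bound on $\ll$; as written, the step from the modulus of $g_t$ to the final inequality does not go through under the lemma's hypothesis.
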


\begin{proof}
To show {\bf(i)}-{\bf(iii)}, it boils down to refine the argument of
\cite[Lemma 2.1]{W}. {\bf(i)} holds for any
$\lambda\ge4(\|b\|_{T,\8}+\Lambda_{T,\si})^2$ via the Banach
fixed-point theorem.

In what follows, we aim to show   {\bf(ii)} and {\bf(iii)}
one-by-one. It is easy to see from \eqref{b0} that
 \beg{equation}\label{z1}
\d\nn_\eta
Z_t^{s,x}=(\nn_{\nn_{\eta}Z_t^{s,x}}\sigma_t)(Z_t^{s,x})\d W_t,
~~t\geq s,~~\nn_\eta Z_s^{s,x}=\eta\in\R^n.
\end{equation}
Using It\^o's isometry and Gronwall's inequality, one has
\begin{equation}\label{a2}
\mathbb{E}|\nn_{\eta}Z_t^{s,x}|^2\le
|\eta|^2\e^{T\|\nabla\sigma\|_{T,\infty}^2}.
\end{equation}
Utilizing \cite[Theorem 7.1, p.39]{Mao} and the elementary
inequality: $(a+b)^p\le2^{p-1}(a^p+b^p)$ for any $a,b>0$ and
$p\ge1$, we deduce that \beg{equation*}\begin{split}
\mathbb{E}|\nn_{\eta}Z_t^{s,x}|^4&\leq
8\Big\{|\eta|^4+36(t-s)\|\nn\si\|_{T,\infty}^4\int_s^t\mathbb{E}|\nn_{\eta}Z_{u}^{s,x}|^4\d
u\Big\},
\end{split}\end{equation*}
 which, combining with Gronwall's
inequality, yields that \beg{equation}\begin{split}\label{b1}
\mathbb{E}|\nn_{\eta}Z_t^{s,x}|^4&\leq 8
|\eta|^4\e^{288\,T^2\|\nabla\sigma\|_{T,\infty}^4}.
\end{split}\end{equation}
 Recall from \cite[(2.8)]{W} the   Bismut formula below
\begin{equation}\label{h0}
\nn_\eta
P_{s,t}^0f(x)=\E\bigg[\frac{f(Z_t^{s,x})}{t-s}\int_s^t\<\si_r^{-1}(Z_r^{s,x})\nn_\eta
Z_r^{s,x},\d W_r\>\bigg],~~~f\in\B_b(\R^n).
\end{equation}
By the Cauchy-Schwartz inequality, the It\^o isometry and
\eqref{a2}, we obtain that
\begin{equation}\label{b7}
\begin{split}
|\nn_\eta
P_{s,t}^0f|^2(x)
&\le
\Lambda_{T,\si}^2|\eta|^2\ff{P_{s,t}^{0}f^{2}(x)}{t-s},~~~f\in\B_b(\R^n),
\end{split}
\end{equation}
where $ \Lambda_{T,\si}>0$ is defined in \eqref{z0}.
So, one infers from \eqref{b2} and \eqref{b7} that
\begin{equation*}
\begin{split}
\|\nn u_s^\ll\|&\le\int_s^T\e^{-\ll( t-s)}\|\nn
P_{s,t}^0\{b_t+\nabla_{b_t}u_t^\ll\}\|\d t\\
&\le\Lambda_{T,\si}(1+\|\nn u^\lambda\|_{T,\8})\|b\|_{T,\8}
\int_0^T\frac{\e^{-\ll t}}{\ss{t}}\d t\\
&\le\,\lambda^{-\frac{1}{2}}\ss\pi\Lambda_{T,\si}\|b\|_{T,\8}(1+\|\nn
u^\lambda\|_{T,\8}).
\end{split}
\end{equation*}
Thus, {\bf(ii)} follows by taking
$\lambda\ge9\pi\Lambda_{T,\si}^2\|b\|_{T,\8}^2$.

In the sequel, we intend to verify {\bf(iii)}. Set
$\gamma_{s,t}:=\nabla_{\eta}\nabla_{\eta^{\prime}}Z_t^{s,x}$ for any
$\eta,\eta'\in\R^n$. Notice from \eqref{z1} that \begin{equation*}\d
\gamma_{s,t}=\big\{\nabla_{\gamma_{s,t}}\sigma_t(Z_{s,t}^{x})+
\nabla_{\nabla_{\eta}Z_t^{s,x}}\nabla_{\nabla_{\eta^{\prime}}Z_t^{s,x}}\sigma_t(Z_t^{s,x})\big\}\d
W_t, ~~ t\geq s,\quad \gamma_{s,s}={\bf0_n}.
\end{equation*}
By the Doob submartingale inequality and the It\^o isometry, besides
the Gronwall inequality and \eqref{a2}, we get that
\beg{equation}\label{b11}\begin{split} \sup_{s\le t\le
T}\mathbb{E}|\gamma_{s,t}|^2&\leq 16T\|\nabla^2\sigma\|_{T,\infty}^2
\e^{288T^2
\|\nabla\sigma\|_{T,\infty}^4+2T\|\nabla\sigma\|_{T,\infty}^2}|\eta|^2|\eta^{\prime}|^2.
\end{split}\end{equation}
From \eqref{h0} and  the Markov property, we have
\begin{equation*}
\begin{split}
&  \nn_\eta P_{s,t}^0f(x) =\E\bigg(\frac{(
P_{\frac{t+s}{2},t}^0f)(Z^{s,x}_{\frac{t+s}{2}})}{(t-s)/2}\int_s^{\frac{t+s}{2}}\<\si^{-1}_r(Z_r^{s,x})\nn_\eta
Z_r^{s,x},\d W_r\>\bigg).
\end{split}
\end{equation*}
This further gives that
\begin{equation*}
\begin{split}
&\frac{1}{2}(\nn_{\eta'}\nn_\eta
P_{s,t}^0f)(x)\\
&=\E\bigg(\frac{(\nn_{\nn_{\eta'}Z^{s,x}_{\frac{t+s}{2}}}
P_{\frac{t+s}{2},t}^0f)(Z^{s,x}_{\frac{t+s}{2}})}{t-s}\int_s^{\frac{t+s}{2}}\<\si_r^{-1}(Z_r^{s,x})\nn_\eta
Z_r^{s,x},\d W_r\>\bigg)\\
&\quad+\E\bigg(\frac{(
P_{\frac{t+s}{2},t}^0f)(Z^{s,x}_{\frac{t+s}{2}})}{t-s}\int_s^{\frac{t+s}{2}}
\<(\nn_{\nn_{\eta'}Z^{s,x}_r} \si_r^{-1})(Z_r^{s,x})\nn_\eta
Z_r^{s,x},\d W_r\>\bigg)\\
&\quad+\E\bigg(\frac{(
P_{\frac{t+s}{2},t}^0f)(Z^{s,x}_{\frac{t+s}{2}})}{t-s}\int_s^{\frac{t+s}{2}}\<\si_r^{-1}(Z_r^{s,x})\nn_{\eta'}\nn_\eta
Z_r^{s,x},\d W_r\>\bigg).
\end{split}
\end{equation*}
Thus, applying  Cauchy-Schwartz's inequality, \cite[Theorem 7.1,
p.39]{Mao} and  It\^o's isometry and taking \eqref{b1}, \eqref{b7}
and \eqref{b11} into consideration, we derive  that
\begin{equation}\label{be}
\begin{split}
&|\nn_{\eta'}\nn_\eta P_{s,t}^0f|^2(x)\\&
\le12\bigg\{6\|\si^{-1}\|_{T,\8}^2\frac{\E|\nn
P_{\frac{t+s}{2},t}^0f|^2(Z^{s,x}_{\frac{t+s}{2}})}{(t-s)^{5/2}}\\
&\quad\times(\E|{\nn_{\eta'}Z^{s,x}_{\frac{t+s}{2}}}|^4)^{1/2}\Big(\int_s^{\frac{t+s}{2}}\E|\nn_\eta
Z_r^{s,x}|^4\d r\Big)^{1/2}\\
&\quad+\frac{P_{s,t}^{0}f^{2}(x)}{(t-s)^2}\|\nn\si^{-1}\|_{T,\8}^2
\int_s^{\frac{t+s}{2}}(\E|\nn_{\eta'}Z^{s,x}_r|^4)^{1/2}(\E|\nn_{\eta}Z^{s,x}_r|^4)^{1/2}\d
r\\
&\quad+\frac{P_{s,t}^{0}f^{2}(x)}{(t-s)^2}\|\si^{-1}\|_{T,\8}^2\int_s^{\frac{t+s}{2}}\E|\nn_{\eta'}\nn_\eta
Z_r^{s,x}|^2\d r\bigg\}\\
&\le\tt\Lambda_{T,\si}|\eta|^2|\eta^{\prime}|^2\ff{P_{s,t}^{0}f^{2}(x)}{(t-s)^2},
\end{split}
\end{equation}
where $\tt\Lambda_{T,\si}>0$ is defined as in \eqref{z2}.

\smallskip

Set $\tt f:=f-f(x)$ for   $f\in\B_b(\R^n)$ which verifies
\begin{equation}\label{a1}
|f(x)-f(y)|\le\phi(|x-y|),~~~~x,y\in\R^n,
\end{equation}
where $\phi:\R_+\mapsto\R_+$ is increasing and $\phi^2$ is concave.
For $f\in\B_b(\R^n)$ such that \eqref{a1}, \eqref{be} implies that
\begin{equation}\label{v1}
\begin{split}
|\nn_{\eta'}\nn_\eta P_{s,t}^0f|^2(x)=|\nn_{\eta'}\nn_\eta
P_{s,t}^0\tt f|^2(x)& \le
\frac{\tt\Lambda_{T,\si}|\eta|^{2}|\eta^{\prime}|^{2}}{(t-s)^2}\E
|f(Z^{s,x}_t)-f(x)|^2\\
&\le\frac{\tt\Lambda_{T,\si}|\eta|^{2}|\eta^{\prime}|^{2}}{(t-s)^2}\phi^2(\|\si\|_{T,\8}(t-s)^{1/2}),
\end{split}
\end{equation}
where in the second display  we have used that
\begin{equation*}
Z^{s,x}_t-x=\int_s^t\si_r(Z^{s,x}_r)\d W_r,
\end{equation*}
and utilized Jensen's inequality as well as It\^o's isometry.

Let $f_t=b_t+\nabla_{b_t}u_t^\ll$. For any
 $ \lambda\ge
9\pi\Lambda_{T,\si}^2\|b\|_{T,\8}^2+4(\|b\|_{T,\8}+\Lambda_{T,\si})^2,
$ note from  {\bf(ii)}, \eqref{b7} and \eqref{be} that
\begin{equation*}
\begin{split}
|f_t(x)-f_t(y)|
&\le(1+\|\nn u^\lambda\|_{T,\8})\phi(|x-y|)+\|b\|_{T,\8}\|\nn
u_t^\ll(x)-\nn u_t(y)\|{\bf \bf{1}}_{\{|x-y|\ge1\}}\\
&\quad+\|b\|_{T,\8}\|\nn u_t^\ll(x)-\nn u_t(y)\|{\bf{1}}_{\{|x-y|\le1\}}\\
&\le\ff{3}{2}\phi(|x-y|)+\|b\|_{T,\8} \ss{|x-y|}{\bf{1}}_{\{|x-y|\ge1\}}\\
&\quad+10\Big(\Lambda_{T,\si}+
 \ss{\tt\Lambda_{T,\si}}\Big)\|b\|_{T,\8}^2\ss{|x-y|}\ss{|x-y|}\log\Big(\e+\ff{1}{|x-y|}\Big)
{\bf{1}}_{\{|x-y|\le1\}}\\
&\le\Big\{3+2\|b\|_{T,\8}+
 28\Big(\Lambda_{T,\si}+
 \ss{\tt\Lambda_{T,\si}}\Big)\|b\|_{T,\8}^2\Big\}
 \tt\phi(|x-y|)
\end{split}
\end{equation*}
with $ \tt\phi(s):=\ss{\phi^2(s)+s},\,s\ge0, $ where in the second
inequality we have used \cite[Lemma 2.2 (1)]{W}, and that the
function $[0,1]\ni x\mapsto\ss x\log(\e+\ff{1}{x})$ is
non-decreasing. As a result, {\bf(iii)} follows from \eqref{v1}.
\end{proof}

\section{Proof of Theorem \ref{T1.1}}\label{sec3}

With Lemma \ref{L2.1} in hand, we now in a position to complete the

\smallskip
\noindent{\bf Proof of Theorem \ref{T1.1}.} Throughout the whole
proof, we assume $ \lambda\ge
9\pi\Lambda_{T,\si}^2\|b\|_{T,\8}^2+4(\|b\|_{T,\8}+\Lambda_{T,\si})^2$
so that {\bf(i)}-{\bf(iii)} in Lemma \ref{L2.1} hold. For any
$t\in[0,T]$, applying It\^{o}'s formula to $x+u_t^\ll(x),x\in\R^n$,
we deduce from \eqref{2.1} that
\begin{equation}\label{b5}
\beg{split}
X_t+u_t^\ll(X_t)=x+u_0^\ll(x)+\lambda\int_0^tu_s^\ll(X_s)\d
s+\int_0^t\{{\bf I_{n\times n}}+(\nabla
u_s^\ll)(\cdot)\}(X_s)\si_s(X_s)\d W_s,
\end{split}
\end{equation}
where ${\bf I_{n\times n}}$ is an $n\times n$ identity matrix, and
that
\begin{equation}\label{b6}
\beg{split}
Y_t+u_t^\ll(Y_t)
&=x+u_0^\ll(x)+\lambda\int_0^tu_s^\ll(Y_s)\d s
+\int_0^t\{{\bf I_{n\times n}}+(\nabla
u_s^\ll)(\cdot)\}(Y_s){\si_{s_\delta} (Y_{s_\delta})\d W_s}\\
&\quad+\int_0^t\{{\bf I_{n\times n}}+(\nabla u_s^\ll)(\cdot)\}(Y_s)\{b_{s_\delta}(Y_{s_\delta})-b_s(Y(s))\}\d s\\
&\quad+\frac{1}{2}\int_0^t\sum_{k,j}{\langle\{
(\sigma_{s_\delta}\sigma^{\ast}_{s_\delta})(Y_{s_\delta})-(\sigma_s\sigma^{\ast}_s)(
Y_s)\}e_{k},e_{j}\rangle}
(\nabla_{e_{k}}\nabla_{e_{j}}u_s^\ll)(Y_s)\d s.
\end{split}
\end{equation}
 For notational
simplicity, set
\begin{equation}\label{s7}
 M^\ll_t:=X_t-Y_t+u^\ll_t(X_t)-u^\ll_t(Y_t).
 \end{equation}  Using the elementary inequality: $ (a+b)^2\le
(1+\vv)(a^2+\vv^{-1}b^2)$ for $\vv,a,b>0, $
 we derive from {\bf(ii)} that
\begin{equation*}
\begin{split}
|X_t-Y_t|^2
&\le(1+\vv)(|M_t^\ll|^2+\vv^{-1}|u^\ll_t(X_t)-u^\ll_t(Y_t)|^2)\\
&\le(1+\vv)\Big(|M_t^\ll|^2+\ff{\vv^{-1}}{4}|X_t-Y_t|^2\Big).
\end{split}
\end{equation*}
In particular, taking $\vv=1$ leads to
\begin{equation*}
|X_t-Y_t|^2 \le\ff{1}{2}|X_t-Y_t|^2+2|M_t^\ll|^2.
\end{equation*}
As a consequence,
\begin{equation}\label{b4}
\E\Big(\sup_{0\le s\le t}|X_s-Y_s|^2\Big) \le4\E\Big(\sup_{0\le s\le
t}|M_s^\ll|^2\Big).
\end{equation}
In what follows,   our goal is to estimate the term on the right
hand side of \eqref{b4}. Observe from the definition of the
Hilbert-Schmidt norm that
\begin{equation}\label{h1}
\begin{split}
&\int_0^t\E\Big|\sum_{k,j}{\langle[
(\sigma_{s_\delta}\sigma^{\ast}_{s_\delta})(
Y_{s_\delta})-(\sigma_s\sigma^{\ast}_s)( Y_s)]e_{k},e_{j}\rangle}
(\nabla_{e_{k}}\nabla_{e_{j}}u_s^\ll)(Y_s)\Big|^2\d s\\
&\1_T\|\nn^2
u^\ll\|_{T,\8}^2\int_0^t\E\|(\sigma_{s_\delta}\sigma^{\ast}_{s_\delta})(
Y_{s_\delta})-(\sigma_s\sigma^{\ast}_s)( Y_s)\|^2_{\mathrm{HS}}\d s.
\end{split}
\end{equation}
Thus, by H\"older's inequality, Doob's submartingale inequality and
It\^o's isometry, it follows from \eqref{b5}, \eqref{b6} and
\eqref{h1} that
\begin{align*}
\E\Big(\sup_{0\le s\le t}|M_s^\ll|^2\Big)
&\le C_T\bigg\{\lambda^2\int_0^t\E|u_s^\ll(X_s)-u_s^\ll(Y_s)|^2\d s\\
&\quad+(1+\|\nabla u\|_{T,\8}^2)\int_0^t\E|b_{s_\delta}(Y_s)-b_{s_\delta}(Y_{s_\delta})|^2\d s\\
&\quad+(1+\|\nabla u\|_{T,\8}^2)\int_0^t\E|b_s(Y_s)-b_{s_\delta}(Y_s)|^2\d s\\
&\quad+\int_0^t\E\|\{(\nabla u_s^\ll)(X_s)-\nabla
u_s^\ll(Y_s)\}\si_s(X_s)\|^2_{\mathrm{HS}}\d s\\
&\quad+(1+\|\nabla
u\|_{T,\8}^2)\int_0^t\E\|\si_{s_\delta}(X_s)-\si_{s_\delta} (
Y_{s_\delta})\|^2_{\mathrm{HS}}\d s\\
&\quad+\|\nn^2 u^\ll\|_{T,\8}^2\int_0^t\E\|\{
\sigma_{s_\delta}(Y_s)-\sigma_{s_\delta}( Y_{s_\delta}
)\}\sigma^{\ast}_{s_\delta}(
Y_{s_\delta} )\|^2_{\mathrm{HS}}\d s\\
&\quad+\|\nn^2 u^\ll\|_{T,\8}^2\int_0^t\E\|
\sigma_s(Y_s)\{\sigma^{\ast}_{s_\delta}(Y_s)-\sigma^{\ast}_{s_\delta}(
Y_{s_\delta} )\}\|^2_{\mathrm{HS}}\d s\\
&\quad+(1+\|\nabla
u\|_{T,\8}^2)\int_0^t\E\|\si_s(X_s)-\si_{s_\delta} (
X_s)\|^2_{\mathrm{HS}}\d s\\
&\quad+\|\nn^2 u^\ll\|_{T,\8}^2\int_0^t\E\|
\sigma_s(Y_s)\{\sigma^{\ast}_s(Y_s)-\sigma^{\ast}_{s_\delta}(
Y_s )\}\|^2_{\mathrm{HS}}\d s\\
&\quad+\|\nn^2 u^\ll\|_{T,\8}^2\int_0^t\E\|\{
\sigma_s(Y_s)-\sigma_{s_\delta}( Y_s )\}\sigma^{\ast}_{s_\delta}(
Y_{s_\delta} )\|^2_{\mathrm{HS}}\d s\bigg\}\\
&=:\sum_{i=1}^{10}I_i(t)
 \end{align*}
for some constant $C_T>0.$ Also, applying H\"older's inequality and
It\^o's isometry, we deduce from ({\bf A1}) that
\begin{equation}\label{hui}
\E|Y_t- Y_{t_\delta}|^2\le \beta_T\dd
\end{equation}
for some constant $\bb_T\ge1.$ By Taylor's expansion, it is readily
to see that
\begin{equation}\label{s1}
I_1(t)+I_4(t)\1 \{\lambda^2\|\nn u^\ll\|_{T,\8}^2+\|\nn^2
u^\ll\|_{T,\8}^2\|\si\|_{T,\8}^2\}\int_0^t\E|X_s-Y_s|^2\d s.
\end{equation}
From ({\bf A3}), one has
\begin{equation}\label{s2}
I_3(t)+\sum_{i=8}^{10}I_i(t)\1_T\{1+\|\nn u^\ll\|_{T,\8}^2+\|\nn^2
u^\ll\|_{T,\8}^2\|\si\|_{T,\8}^2\}\phi(\ss\delta)^2.
\end{equation}
In view of ({\bf A2}), we derive that
\begin{equation}\label{s3}
\begin{split}
&I_2(t)+\sum_{i=5}^7I_i(t)\\&\1\{1+\|\nn
u^\ll\|_{T,\8}^2\}\int_0^t\E\phi(|Y_s-Y_{s_\delta}|)^2\d s\\
&\quad+\{1+\|\nn
u^\ll\|_{T,\8}^2\}\|\nabla\si\|_{T,\8}^2\int_0^t\E|X_s-Y_s|^2\d s\\
&\quad+\{1+\|\nn u^\ll\|_{T,\8}^2+\|\nn^2
u^\ll\|_{T,\8}^2\|\si\|_{T,\8}^2\}\|\nabla\si\|_{T,\8}^2\int_0^t\E|Y_s-Y_{s_\delta}|^2\d
s.
\end{split}
\end{equation}
Thus, taking \eqref{hui}-\eqref{s3} into account and applying
Jensen's inequality gives that
\begin{equation*}\beg{split}
\E\Big(\sup_{0\le s\le t}|M_s^\ll|^2\Big)&\1_T
C_{T,\si,\lambda}\{\delta+\phi(\bb_T\ss\delta)^2\}+ C_{T,\si,\lambda}\int_0^t\E|X_s-Y_s|^2\d s,
\end{split}\end{equation*}
where \begin{equation}\label{x1}
\begin{split}
C_{T,\si,\lambda}:=\{1+\|\nn \si\|_{T,\8}^2\}\Big\{\ff{5}{4}
+(1+\ll^2)\|\nn^2 u^\ll\|_{T,\8}^2\|\si\|_{T,\8}^2\Big\}.
\end{split}
\end{equation} Owing to
$\phi\in\D,$ we conclude that $\phi(0)=0,$ $\phi'>0$ and $\phi''<0$
so that, for any $c>0$ and $\dd\in(0,1),$
\begin{equation*}
\phi(c\dd)=\phi(0)+\phi'(\xi)c\dd\ge\phi'(c)c\dd
\end{equation*}
by recalling $\dd\in(0,1),$ where $\xi\in(0,c\dd).$ This further
implies that
\begin{equation*}\beg{split}
\E\Big(\sup_{0\le s\le t}|M_s^\ll|^2\Big)&\1_T
C_{T,\si,\lambda}\phi(\bb_T\ss\delta)^2+
C_{T,\si,\lambda}\int_0^t\E|X_s-Y_s|^2\d s.
\end{split}\end{equation*}
Substituting this into \eqref{b4} gives that
\begin{equation*}\beg{split}
\E\Big(\sup_{0\le s\le t}|X_s-Y_s|^2\Big)&\1_T
C_{T,\si,\lambda}\phi(\bb_T\ss\delta)^2+
C_{T,\si,\lambda}\int_0^t\E|X_s-Y_s|^2\d s.
\end{split}\end{equation*}
Thus,  Gronwall's inequality implies that there exists $\tt C_T>0$
such that
\begin{equation}\label{z4}\beg{split}
\E\Big(\sup_{0\le s\le t}|X_s-Y_s|^2\Big)&\le \tt C_T
C_{T,\si,\lambda}\e^{\tt C_T
C_{T,\si,\lambda}}\phi(\bb_T\ss\delta)^2.
\end{split}\end{equation}
So the desired assertion holds immediately.

\section{Proof of Theorem \ref{T1.2}}\label{sec4}
We shall adopt the   cut-off approach to finish the

\begin{proof}[{\bf Proof of Theorem \ref{T1.2}}] Take $\psi\in
C_b^\8(\R_+)$ such that $0\le\psi\le1$, $\psi(r)=1$ for $r\in[0,1]$
and $\psi(r)=0$ for $r\ge2$. For any $t\in[0,T]$ and $k\ge1$, set
define the  cut-off functions
\begin{equation*}
b^{(k)}_t(x)=b_t(x)\psi(|x|/k)~~~~~\mbox{and}~~~~~\si^{(k)}_t(x)=\si_t(\psi(|x|/k)x),~~~~x\in\R^n.
\end{equation*}
It is easy to see that $b^{(k)}$ and $\si^{(k)}$ satisfy {\bf(A1)}.
For fixed $k\ge1,$ consider the following SDE
\begin{equation}\label{b8}
\d X^{(k)}_t=b^{(k)}_t(X^{(k)}_t)\d t+\si^{(k)}_t(X^{(k)}_t)\d
W_t,~~~t>0,~~~X^{(k)}_0=X_0=x.
\end{equation}
The corresponding continuous-time EM of \eqref{b8} is defined by
\begin{equation}\label{h5}
\d Y_t^{(k)}=b_{t_\delta}^{(k)}( Y_{t_\delta}^{(k)})\d
t+\sigma_{t_\delta}^{(k)} ( Y_{t_\delta}^{(k)})\d
W_t,~~~t>0,~~~Y_0^{(k)}=X_0=x.
\end{equation}
Applying the BDG inequality, the H\"older inequality and the
Gronwall inequality, we   deduce from {\bf (A1')}  that
\begin{equation}\label{w}
\E\Big(\sup_{0\le t\le T}|X_t|^4\Big)+\E\Big(\sup_{0\le t\le
T}|Y_t|^4\Big)+\E\Big(\sup_{0\le t\le
T}|X_t^{(k)}|^4\Big)+\E\Big(\sup_{0\le t\le T}|Y_t^{(k)}|^4\Big)\le
C_T
\end{equation}
for some constant $C_T>0$. Note that
\begin{equation*}
\begin{split}
\E\Big(\sup_{0\le t\le T}|X_t-Y_t|^2\Big)&\le2\E\Big(\sup_{0\le t\le
T}|X_t-X_t^{(k)}|^2\Big)+2\E\Big(\sup_{0\le t\le
T}|X_t^{(k)}-Y_t^{(k)}|^2\Big)\\
&\quad+2\E\Big(\sup_{0\le t\le T}|Y_t-Y_t^{(k)}|^2\Big)\\
&=:I_1+I_2+I_3.
\end{split}
\end{equation*}
For the terms $I_1$ and $I_3$, in terms of the Chebyshev inequality
we find from \eqref{w} that
\begin{equation*}
\begin{split}
I_1+I_3&\1\E\Big(\sup_{0\le t\le T}|X_t-X_t^{(k)}|^2{\bf{1}}_{\{\sup_{0\le t\le T}|X_t|\ge k\}}\Big)\\
&\quad+\E\Big(\sup_{0\le t\le T}|Y_t-Y_t^{(k)}|^2{\bf{1}}_{\{\sup_{0\le t\le T}|Y_t|\ge k\}}\Big)\\
&\1\ss{\E\Big(\sup_{0\le t\le T}|X_t|^4\Big)+\E\Big(\sup_{0\le t\le
T}|X_t^{(k)}|^4\Big)}\frac{\ss{\E\Big(\sup_{0\le t\le
T}|X_t|^2\Big)}}{k}\\
&\quad+\ss{\E\Big(\sup_{0\le t\le T}|Y_t|^4\Big)+\E\Big(\sup_{0\le
t\le T}|Y_t^{(k)}|^4\Big)}\frac{\ss{\E\Big(\sup_{0\le t\le
T}|Y_t|^2\Big)}}{k}\\
&\1_T\frac{1}{k},
\end{split}
\end{equation*}
where in the first display we have used the facts that $\{X_t\neq
X_t^{(k)}\}\subset\{\sup_{0\le s\le t}|X_s|\ge k\}$ and $\{Y_t\neq
Y_t^{(k)}\}\subset\{\sup_{0\le s\le t}|Y_s|\ge k\}$. Observe from
{\bf(A1')} that $
9\pi\Lambda_{T,\si^{(k)}}^2\|b^{(k)}\|_{T,\8}^2+4(\|b^{(k)}\|_{T,\8}+\Lambda_{T,\si^{(k)}})^2\le
\e^{ck^2} $ for some $c>0.$ Next, according to \eqref{z4},
 by taking $\lambda=\e^{ck^2}$ there
exits $C_T>0$ such that
\begin{equation*}
\begin{split}
I_2\le \e^{C_T C_{T,\si^{(k)},\lambda}}\phi_k(\bb_T\ss\delta)^2.
\end{split}
\end{equation*}
Herein, $ C_{T,\si^{(k)},\lambda}>0$ is defined as in \eqref{x1}
with $\si$ and $u^\lambda$ replaced by $\si^{(k)}$ and
$u^{\lambda,k}$, respectively, where $u^{\ll,k}$ solves \eqref{b2}
by writing $b^{(k)}$ instead of $b$. Consequently, we conclude that
\begin{equation}\label{z6}
\E\Big(\sup_{0\le t\le T}|X_t-Y_t|^2\Big)\le\frac{\bar c_0}{k}+\bar
c_0\e^{C_T C_{T,\si^{(k)},\lambda}}\phi_k(\bb_T\ss\dd)^2
\end{equation}
for some $\bar c_0>0.$ For any $\vv>0$, taking $k=\ff{2\bar
c_0}{\vv}$ and letting $\dd$ go to zero   implies that
\begin{equation*}
\lim_{\dd\rightarrow0}\E\Big(\sup_{0\le t\le
T}|X_t-Y_t|^2\Big)\le\vv.
\end{equation*}
Thus, \eqref{h6} follows due to the arbitrariness of $\vv$.

 For
$\phi_k(s)=\e^{\e^{c_0k^4}}s^{\alpha},s\ge0,$ with
$\alpha\in(0,1/2],$ we deduce from Lemma \ref{L2.1} {\bf(iii)} that
\begin{equation}\label{z5}
 \|\nn^2 u^{\ll,k}\|_{T,\8}\le\ff{1}{2}
\end{equation}
whenever
\begin{equation}\label{z8}
\begin{split}
\lambda&\ge\Big\{2\Upsilon_{T,\si^{(k)}}\Big(\e^{\e^{c_0k^4}}\|\si^{(k)}\|_{T,\8}^\alpha
\Gamma(\alpha/2)+\|\si^{(k)}\|_{T,\8}^{1/2}\Gamma(1/4)\Big)\Big\}^{2/\alpha}\\
&\quad~~~~+9\pi(\Lambda_{T,\si^{(k)}})^2\|b^{(k)}\|_{T,\8}^2+4(\|b^{(k)}\|_{T,\8}+\Lambda_{T,\si^{(k)}})^2.
\end{split}
\end{equation}
Since the right hand side of \eqref{z8} can be bounded by $\e^{\bar
C_T\,k^4}$ for some constant $\bar C_T>0$ due to {\bf (A1')}, we can
take $\ll=\e^{\bar C_T\,k^4}$  so that \eqref{z5} holds. Thus,
\eqref{z6}, together with \eqref{z5} and {\bf(A1')}, yields that
\begin{equation*}
\E\Big(\sup_{0\le t\le T}|X_t-Y_t|^2\Big)\le\frac{\bar C_T}{k}+\bar
C_T\e^{\e^{ \tt C_Tk^4}}\dd^\aa
\end{equation*}
for some constants $\bar C_T,\tt C_T>0$. Thus, \eqref{z7} follows
immediately by taking
\begin{equation*}
k=(\tt C_T\log\log\dd^{-\aa\vv})^{\frac{1}{4}}.
\end{equation*}
\end{proof}

\section{Proof of Theorem \ref{th1}}\label{sec5}
 The proof of Theorem \ref{th1} relies on regularization properties of
the following $\R^{2n}$-valued degenerate parabolic equation
\begin{equation}\label{song}
\partial_t u_t^\lambda+\mathscr{L}_t^{b,\si} u_t^\lambda+b_t=\lambda
u_t^\lambda,~~~~u_T^\ll={\bf0_{2n}},~~~t\in[0,T],~~~\lambda>0,
\end{equation}
where ${\bf0_{2n}}$ is the zero vector in $\R^{2n}$,
\begin{equation*}
b_t:= \Spvek[c]{b_t^{(1)};b_t^{(2)}} ~\mbox{ and
}~\mathscr{L}_t^{b,\si}u^\ll:=\frac{1}{2}\sum_{i,j=1}^n{\langle
(\si_t\si_t^*)(\cdot)e_{i},e_{j}\rangle}\nabla_{e_{i}}^{(2)}\nabla_{e_{j}}^{(2)}u^\ll+
\nn_{b_t^{(1)}}^{(1)} u^\ll+\nn_{b_t^{(2)}}^{(2)} u^\ll.
\end{equation*}

For any $\phi\in\D_0\cap\mathscr{S}_0$, set
\begin{equation*}
\bar{\mathscr{Q}}_\phi:=\sup_{t\in[0,T]}\{[b_t^{(1)}]_{\phi_{[2/3]},\8}+\interleave\nn^{(2)}b_t^{(1)}\interleave_{\8,\phi}+\|\si_t^{-1}\|_\8
+\interleave\si_t^{-1}\interleave_{\phi_{[2/3]}}+\interleave
b_t^{(2)}\interleave_{\phi_{[2/3]},\phi}\}
\end{equation*}
and
\begin{equation*}
\mathscr{Q}_\phi:=\bar{\mathscr{Q}}_\phi+\sup_{t\in[0,T]}[b_t^{(2)}]_{\phi_{[2/3]},\phi^{7/2}},~~~~\mathscr{Q}_\phi^\prime:=\bar{\mathscr{Q}}_\phi
+\sup_{t\in[0,T]}[\nn^{(2)}\si_t]_{\phi_{[1/9]},\8},
\end{equation*}
where, for $\phi\in\mathscr{S}_0$,
\begin{equation*}
\phi_{[\alpha]}(t):=t^\alpha\phi(t){\bf{1}}_{\{t\ge1\}}+2c_\alpha
t{\bf{1}}_{\{t>1\}}~~\mbox{ with }~
~c_\alpha:=\sup_{s\in(0,1]}(s^\alpha\phi(s)).
\end{equation*}

The following key lemma on regularity estimate of solution to
\eqref{song} is cited from \cite[Theorem 2.3]{WZ} and is an
essential ingredient in analyzing numerical approximation.

\begin{lem}\label{regu}
{\rm Under {\bf (C1)}, \eqref{song} has a unique smooth solution
such that for all $t\in[0,T],$
\begin{equation}\label{song1}
\begin{split}
\interleave\nn
u_t^\ll\interleave_{1_{[1/3],\8}}&+\|\nn^{(1)}\nn^{(2)}u_t^\ll\|_\8+\interleave\nn^{(2)}\nn^{(2)}u_t^\ll\interleave_{\phi^{3/2}} \\
&\le
C\int_0^t\e^{-\lambda(t-s)}\frac{\phi((t-s)^{\ff{1}{2}})}{t-s}[b_s]_{\phi_{[2/3]},\phi^{7/2}}\d
s,
\end{split}
\end{equation}
and
\begin{equation}\label{song2}
\interleave\nn
u_t^\ll\interleave_{1_{[1/3],\8}}+\|\nn\nn^{(2)}u_t^\ll\|_\8\le
C^\prime\int_0^t\e^{-\lambda(t-s)}\frac{\phi((t-s)^{\ff{1}{2}})}{t-s}[b_s]_{\phi_{[2/3]},\phi}\d
s,
\end{equation}
where $C=C(\phi,\mathscr{Q}_\phi)$ and
$C^\prime=C^\prime(\phi,\mathscr{Q}_\phi^\prime)$ are
 increasing w.r.t. $\mathscr{Q}_\phi$ and
$\mathscr{Q}_\phi^\prime$, respectively.

}
\end{lem}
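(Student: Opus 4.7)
The plan is to adapt the Zvonkin-transform strategy used for Theorem~\ref{T1.1} to the degenerate setting, replacing the non-degenerate Kolmogorov equation \eqref{2.1} by its degenerate analogue \eqref{song} and using the regularity estimates of Lemma~\ref{regu} in place of those of Lemma~\ref{L2.1}. First I would fix $\lambda>0$ sufficiently large so that the right-hand side of \eqref{song2} forces $\|\nabla u^{\lambda}\|_{T,\8}\le\tfrac12$; this is possible because (C1)-(C3) guarantee that $[b_s]_{\phi_{[2/3]},\phi}$ is uniformly bounded, and the factor $\e^{-\lambda(t-s)}$ can be made arbitrarily small for large $\lambda$. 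From \eqref{song1} one simultaneously retains a uniform bound on $\nabla^{(1)}\nabla^{(2)}u^\lambda$ and a $\phi^{3/2}$-H\"older modulus on $\nabla^{(2)}\nabla^{(2)}u^\lambda$.

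Next, I would apply It\^o's formula to the map $x\mapsto x+u_t^\lambda(x)$ along the exact solution $X_t$ of \eqref{b3} and along the interpolated EM process $Y_t$ of \eqref{s0}. Because $u^\lambda$ solves \eqref{song}, the genuine drift $b=(b^{(1)},b^{(2)})$ is absorbed in the expansion of $X_t+u_t^\lambda(X_t)$, while the expansion of $Y_t+u_t^\lambda(Y_t)$ produces the same form plus discretisation remainders built from $b_{s_\dd}(Y_{s_\dd})-b_s(Y_s)$ and $\sigma_{s_\dd}(Y_{s_\dd})-\sigma_s(Y_s)$. Setting
\[
M_t^{\lambda}:=X_t-Y_t+u_t^{\lambda}(X_t)-u_t^{\lambda}(Y_t),
\]
the estimate $\|\nabla u^\lambda\|_{T,\8}\le\tfrac12$ together with $(a+b)^2\le 2a^2+2b^2$ yields $\E\sup_{s\le t}|X_s-Y_s|^2\le 4\E\sup_{s\le t}|M_s^\lambda|^2$, exactly as in the proof of Theorem~\ref{T1.1}.

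The discretisation error is then estimated component-wise, exploiting the hypoelliptic structure of $Y$. For the first component one has the pathwise bound $|Y_s^{(1)}-Y_{s_\dd}^{(1)}|\le\|b^{(1)}\|_{T,\8}\dd$, while for the second the Burkholder-Davis-Gundy inequality together with (C1) yields $\E|Y_s^{(2)}-Y_{s_\dd}^{(2)}|^2\1_T\dd$. I would split each drift and diffusion increment into first- and second-variable contributions using (C2), (C3) and (C4): the H\"older-Dini $\tfrac23$-exponent in $x^{(1)}$ combined with $|Y_s^{(1)}-Y_{s_\dd}^{(1)}|\1\dd$ produces a term of order $\dd^{4/3}\phi^2(\dd)$, while the $\phi^{7/2}$-modulus of $b^{(2)}$ in $x^{(2)}$ combined with $\E|Y_s^{(2)}-Y_{s_\dd}^{(2)}|^2\1\dd$ gives, via the hypothesis $\phi\in\D^\vv$ and Jensen's inequality applied to the concave map $\phi^{2(1+\vv)}$, a bound of order $\phi(C_T\ss\dd)^{2}$. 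The temporal increments in (C4) and the regularity of $\si$ are handled in the same fashion.

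The main obstacle is to reconcile the mixed second-derivative terms arising in the It\^o expansion with the fact that Lemma~\ref{regu} only controls $\nabla^{(2)}\nabla^{(2)}u^\lambda$ through the $\phi^{3/2}$-H\"older modulus rather than uniformly. Precisely here the $\phi^{7/2}$-regularity of $b^{(2)}$ imposed in (C3) is essential: it enters the source term $[b_s]_{\phi_{[2/3]},\phi^{7/2}}$ of \eqref{song1}, supplying exactly the extra power of $\phi$ needed to absorb the non-uniform modulus of $\nabla^{(2)}\nabla^{(2)}u^\lambda$ when paired with $|Y_s^{(2)}-Y_{s_\dd}^{(2)}|\sim\ss\dd$ in the It\^o correction. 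Assembling all ten or so contributions (one for each of the analogues $I_1,\ldots,I_{10}$ in the proof of Theorem~\ref{T1.1}) via H\"older's inequality and Doob's martingale inequality yields
\[
\E\sup_{s\le t}|M_s^\lambda|^2\1_T\phi(C_T\ss\dd)^2+\int_0^t\E|X_s-Y_s|^2\,\d s,
\]
after which Gronwall's inequality concludes the proof.
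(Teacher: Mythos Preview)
Your proposal does not address the stated lemma at all. Lemma~\ref{regu} asserts existence, uniqueness and quantitative regularity estimates \eqref{song1}--\eqref{song2} for the solution $u^\lambda$ of the \emph{degenerate parabolic PDE} \eqref{song}. What you have written is a proof sketch for Theorem~\ref{th1} --- the EM convergence rate for the degenerate SDE \eqref{b3} --- which \emph{uses} Lemma~\ref{regu} as a black box (you explicitly say ``using the regularity estimates of Lemma~\ref{regu} in place of those of Lemma~\ref{L2.1}''). Nowhere do you attempt to construct $u^\lambda$, verify its smoothness, or derive the bounds \eqref{song1}--\eqref{song2}; instead you assume them and proceed with the Zvonkin transform and It\^o expansion of $X_t$ and $Y_t$.

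For comparison: in the paper, Lemma~\ref{regu} is not proved either --- it is simply cited from \cite[Theorem~2.3]{WZ}. A genuine proof would require parametrix or semigroup estimates for the hypoelliptic operator $\mathscr{L}_t^{b,\sigma}$, exploiting the H\"ormander-type non-degeneracy encoded in $\nabla^{(2)}b^{(1)}\in\mathbb{M}_{\rm non}^{n,cc}$, together with a fixed-point argument in a suitable H\"older--Dini scale to handle the first-order perturbation $\nabla_{b^{(1)}}^{(1)}+\nabla_{b^{(2)}}^{(2)}$. None of this analysis appears in your proposal. If your intent was to prove Theorem~\ref{th1}, your outline is broadly in line with the paper's Section~\ref{sec5}; but as a proof of Lemma~\ref{regu} it is vacuous.
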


From now on, we move forward  to complete the
\begin{proof}[{\bf Proof of Theorem \ref{th1}}] For notational
simplicity, set
\[
X_t:=\Spvek[c]{X_t^{(1)};X_t^{(2)}},\quad
Y_t:=\Spvek[c]{Y_t^{(1)};Y_t^{(2)}} \mbox{ and } ~~
~b_t(x):=\Spvek[c]{b_t^{(1)}(x);b_t^{(2)}(x)},~~~x\in\R^{2n}.
\]
Then \eqref{b3} and \eqref{s0} can be reformulated respectively as
\begin{equation*}
\d X_t=b_t(X_t)\d t+\Spvek[c]{{\bf0_{n\times n}};\si_t}(X_t)\d
W_t,~~~t>0,~~~X_0=x=\Spvek[c]{x_1;x_2}\in\R^{2n},
\end{equation*}
where ${\bf0_{n\times n}}$ is an $n\times n$ zero matrix, and
\begin{equation*}
\d Y_t=b_{t_\dd}(Y_{t_\dd})\d t+\Spvek[c]{{\bf0_{n\times
n}};\si_{t_\dd}}(Y_{t_\dd})\d W_t,~~~t>0,~~~Y_0=x\in\R^{2n}.
\end{equation*}
Note from \eqref{song1} that there exists an $\ll_0>0$ sufficiently
large such that
\begin{equation}\label{s6}
 \|\nn u^\ll\|_{T,\8}+\|\nn^{(2)}\nn^{(2)}u^\ll\|_{T,\8}+\|\nn\nn^{(2)}u^\ll\|_{T,\8}\le
 \ff{1}{2},~~~~\ll\ge\ll_0.
 \end{equation}
Applying It\^o's formula to $x+u_t^\ll(x)$ for any $ x\in\R^{2n}$,
we deduce that
\begin{equation}\label{s4}
X_t+u_t^\ll(X_t)= x+u_0^\ll(x)+\lambda\int_0^tu_s^\ll(X_s)\d
s+\int_0^t\Spvek[c]{{\bf0_{n\times n}};\si_s}(X_s)\d
W_s+\int_0^t(\nn^{(2)}_{\si_s\d W_s}u_s^\ll)(X_s),
\end{equation}
and that
\begin{equation}\label{s5}
\begin{split}
Y_t+u_t^\ll(Y_t)&= x+u_0^\ll(x)+\lambda\int_0^tu_s^\ll(Y_s)\d s\\
&\quad+\int_0^t\{{\bf I_{2n\times 2n}}+(\nn
u_s)(\cdot)\}(Y_s)\{b_{s_\delta}(Y_{s_\delta})-b_s(Y_s)\}\d s\\
&\quad+\int_0^t\Spvek[c]{{\bf0_{n\times
n}};\si_{s_\dd}}(Y_{s_\dd})\d
W_s+\int_0^t(\nn^{(2)}_{\si_{s_\dd}(Y_{s_\dd})\d W_s}u_s^\ll)(Y_s)\\
&\quad+ \frac{1}{2}\int_0^t\sum_{k,j=1}^n{\langle\{
(\sigma_{s_\delta}\sigma^{\ast}_{s_\delta})(Y_{s_\delta})-(\sigma_s\sigma^{\ast}_s)(
Y_s)\}e_{k},e_{j}\rangle}
(\nabla_{e_{k}}^{(2)}\nabla_{e_{j}}^{(2)}u_s^\ll)(Y_s)\d s,
\end{split}
\end{equation}
where ${\bf I_{2n\times 2n}}$ is an $2n\times 2n$ identity matrix.
Thus,  using H\"older's inequality, Doob's sub-martingale inequality
and It\^o's isometry and taking \eqref{h1} into consideration gives
that
\begin{align*}
\E\Big(\sup_{0\le s\le t}|M_s^\ll|^2\Big)
&\le C_{0,T}\bigg\{ \int_0^t\E|u_s^\ll(X_s)-u_s^\ll(Y_s)|^2\d s\\
&\quad+(1+\|\nn u^\ll\|_{T,\8}^2)\int_0^t\E|b_{s_\delta}(Y_s)-b_{s_\delta}(Y_{s_\delta})|^2\d s\\
&\quad+(1+\|\nn u^\ll\|_{T,\8}^2)\int_0^t\E|b_s(Y_s)-b_{s_\delta}(Y_s)|^2\d s\\
&\quad+\int_0^t\E\|\{(\nabla^{(2)} u_s^\ll)(X_s)-\nabla^{(2)}
u_s^\ll(Y_s)\}\si_s(X_s)\|^2_{\mathrm{HS}}\d s\\
&\quad+(1+\|\nabla^{(2)}
u^\ll\|_{T,\8}^2)\int_0^t\E\|\si_{s_\dd}(X_s)-\si_{s_\delta} (
Y_{s_\delta})\|^2_{\mathrm{HS}}\d s\\
&\quad+(1+\|\nabla^{(2)}
u^\ll\|_{T,\8}^2)\int_0^t\E\|\si_s(X_s)-\si_{s_\delta} (
X_s)\|^2_{\mathrm{HS}}\d s\\
&\quad+\|\nabla^{(2)}\nabla^{(2)} u^\ll\|_{T,\8}^2\int_0^t\E\|\{
\sigma_{s_\delta}(Y_s)-\sigma_{s_\delta}( Y_{s_\delta}
)\}\sigma^{\ast}_{s_\delta}(
Y_{s_\delta} )\|^2_{\mathrm{HS}}\d s\\
&\quad+\|\nabla^{(2)}\nabla^{(2)} u^\ll\|_{T,\8}^2\int_0^t\E\|
\sigma_s(Y_s)\{\sigma^{\ast}_{s_\delta}(Y_s)-\sigma^{\ast}_{s_\delta}(
Y_{s_\delta} )\}\|^2_{\mathrm{HS}}\d s\\
&\quad+\|\nabla^{(2)}\nabla^{(2)} u^\ll\|_{T,\8}^2\int_0^t\E\|
\sigma_s(Y_s)\{\sigma^{\ast}_s(Y_s)-\sigma^{\ast}_{s_\delta}(
Y_s )\}\|^2_{\mathrm{HS}}\d s\\
&\quad+\|\nabla^{(2)}\nabla^{(2)}u^\ll\|_{T,\8}^2\int_0^t\E\|\{
\sigma_s(Y_s)-\sigma_{s_\delta}( Y_s )\}\sigma^{\ast}_{s_\delta}(
Y_{s_\delta} )\|^2_{\mathrm{HS}}\d s\bigg\}\\
&=:\sum_{i=1}^{10}J_i(t)
\end{align*}
for some constant $C_{0,T}>0$, where   $M_t^\ll$ is defined as in
\eqref{s7}. By using H\"older's inequality and \cite[Theorem 7.1,
p.39]{Mao}, {\bf(C1)} implies that
\begin{equation}\label{s9}
\E|Y_t-Y_{t_\dd}|^p\1\dd^{\ff{p}{2}},~~~~p\ge1.
\end{equation}
Utilizing Taylor's expansion, one gets from \eqref{hui} and
\eqref{s6} that
\begin{equation*}
\begin{split}
J_1(t)+J_4(t)+J_5(t)&\1\{1+\|\nn u^\ll\|_{T,\8}^2+\|\nn
\nn^{(2)}u^\ll\|_{T,\8}^2\|\si\|_{T,\8}^2\}\int_0^t\E|X_s-Y_s|^2\d
s\\
&\quad+\{1+\|\nn^{(2)}
u^\ll\|_{T,\8}^2\}\int_0^t\E|Y_s-Y_{s_\dd}|^2\d
s\\
&\1\dd+\int_0^t\E|X_s-Y_s|^2\d s.
\end{split}
\end{equation*}
Next, {\bf(C1)}, {\bf(C5)} and \eqref{s6} yield  that
\begin{equation*}
J_3(t)+J_6(t)+J_9(t)+J_{10}(t)\1\phi(\ss\dd)^2.
\end{equation*}
Additionally, by virtue of {\bf(C1)}, {\bf(C2)}, and \eqref{s6}, we
infer from {\bf(C3)}  that
\begin{equation*}
\begin{split}
J_2(t)+J_7(t)+J_8(t)&\1\dd+\int_0^t\E|b_{s_\delta}(Y_s^{(1)},Y_s^{(2)})-b_{s_\delta}(Y_{s_\delta}^{(1)},Y_s^{(2)})|^2\d
s\\
&\quad+
\int_0^t\E|b_{s_\delta}(Y_{s_\dd}^{(1)},Y_s^{(2)})-b_{s_\delta}(Y_{s_\delta}^{(1)},Y_{s_\delta}^{(2)})|^2\d
s\\
&\le
C_{1,T}\bigg\{\dd+\int_0^t\E|b_{s_\delta}^{(1)}(Y_s^{(1)},Y_s^{(2)})-b_{s_\delta}^{(1)}(Y_{s_\delta}^{(1)},Y_s^{(2)})|^2\d
s\\
&\quad+\int_0^t\E|b_{s_\delta}^{(2)}(Y_s^{(1)},Y_s^{(2)})-b_{s_\delta}^{(2)}(Y_{s_\delta}^{(1)},Y_s^{(2)})|^2\d
s\\
&\quad+
\int_0^t\E|b_{s_\delta}^{(1)}(Y_{s_\dd}^{(1)},Y_s^{(2)})-b_{s_\delta}^{(1)}(Y_{s_\delta}^{(1)},Y_{s_\delta}^{(2)})|^2\d
s\\
&\quad+
\int_0^t\E|b_{s_\delta}^{(2)}(Y_{s_\dd}^{(1)},Y_s^{(2)})-b_{s_\delta}^{(2)}(Y_{s_\delta}^{(1)},Y_{s_\delta}^{(2)})|^2\d
s\bigg\}\\
&=:C_{1,T}\dd+\sum_{i=1}^4\Lambda_i(t)
\end{split}
\end{equation*}
for some constant $C_{1,T}>0$. From {\bf(C2)}, {\bf(C3)}, \eqref{s9}
and $\phi\in\D^\vv$, we derive from H\"older's inequality and
Jensen's inequality that
\begin{equation}\label{s8}
\begin{split}
\Lambda_1(t)+\Lambda_2(t)&\1\sum_{i=1}^2\int_0^t
\E\bigg(\ff{|b_{s_\delta}^{(i)}(Y_s^{(1)},Y_s^{(2)})
-b_{s_\delta}^{(i)}(Y_{s_\delta}^{(1)},Y_s^{(2)})|}{|Y_s^{(1)}-Y_{s_\delta}^{(1)}|^{\ff{2}{3}}\phi(|Y_s^{(1)}-Y_{s_\delta}^{(1)}|)}\bf{1}_{\{Y_s^{(1)}\neq Y_{s_\delta}^{(1)}\}}\\
&\quad~~~~~~~~~~~~~\times
|Y_s^{(1)}-Y_{s_\delta}^{(1)}|^{\ff{2}{3}}\phi(|Y_s^{(1)}-Y_{s_\delta}^{(1)}|)\bigg)^2\d
s\\
&\1\int_0^t\E(|Y_s^{(1)}-Y_{s_\delta}^{(1)}|^{\ff{2}{3}}\phi(|Y_s^{(1)}-Y_{s_\delta}^{(1)}|))^2\d
s\\
&\1
\int_0^t\Big(\E\phi(|Y_s^{(1)}-Y_{s_\delta}^{(1)}|)^{2(1+\vv)}\Big)^{\ff{1}{1+\vv}}\Big(\E|Y_s^{(1)}-Y_{s_\delta}^{(1)}|^{\ff{4(1+\vv)}{3\vv}}\Big)^{\ff{\vv}{1+\vv}}\d
s\\
&\1\dd^{\ff{2}{3}} \phi(C_{2,T}\ss\dd)^2
\end{split}
\end{equation}
for some constant $C_{2,T}>0.$ With regard to the term
$\Lambda_3(t)$, {\bf(C1)} and \eqref{s9} leads to
\begin{equation}
\Lambda_3(t)\1\|\nn^{(2)}b^{(1)}\|_{T,\8}^2\int_0^t\E|Y_s^{(1)}-Y_{s_\delta}^{(1)}|^2\d
s\1\dd.
\end{equation}
Since $[b_t^{(2)}]_{\8,\phi}<\8$ due to {\bf(C3)}, observe from
Jensen's inequality and \eqref{s9} that
\begin{equation*}
\begin{split}
\Lambda_4(t)&\1\int_0^t\E\bigg(\ff{|b_{s_\delta}^{(2)}(Y_{s_\dd}^{(1)},Y_s^{(2)})-b_{s_\delta}^{(2)}
(Y_{s_\delta}^{(1)},Y_{s_\delta}^{(2)})|}{\phi(|Y_s^{(2)}-Y_{s_\delta}^{(2)}|)}{\bf{1}}_{\{Y_s^{(2)}\neq
Y_{s_\delta}^{(2)}\}}\times\phi(|Y_s^{(2)}-Y_{s_\delta}^{(2)}|)\bigg)^2\d
s\\
&\1\int_0^t\E\phi(|Y_s^{(2)}-Y_{s_\delta}^{(2)}|)^2\d s\\
&\1\phi(C_{3,T}\ss\dd)^2
\end{split}
\end{equation*}
for some constant $ C_{3,T}>0.$ Consequently, we arrive at
\begin{equation*}
\begin{split}
\E\Big(\sup_{0\le s\le t}|X_s-Y_s|^2\Big)\1_T  \phi(C_{4,T}\ss\dd)^2
+\int_0^t\E|X(s)-Y(s)|^2\d s
\end{split}
\end{equation*}
for some constant $C_{4,T}\ge1$.  Thus, the desired assertion
follows from the Gronwall inequality.

\end{proof}

\paragraph{Acknowledgement.} The authors would like to thank Professor Feng-Yu Wang for helpful comments.

\beg{thebibliography}{99} {\small

\setlength{\baselineskip}{0.14in}
\parskip=0pt

\bibitem{B16} Bachmann, S., Well-posedness and stability for a class of
stochastic delay differential equations with singular drift,
arXiv:1608.07534v1.

\bibitem{BY}  Bao, J., Yuan, C., Convergence rate of EM scheme for
SDDEs, {\it Proc. Amer. Math. Soc.}, {\bf141} (2013),  3231--3243.

\bibitem{BGT} Bingham, N.~H.,  Goldie, C.~M.,  Teugels, J.~L., Regular Variation,
Cambridge University Press, Cambridge, UK, 1987.

\bibitem{C12} Chaudru de Raynal, P.~E., Strong existence and uniqueness for
stochastic differential equation with H\"older drift and degenerate
noise, arXiv:1205.6688v3.

\bibitem{GW} Guillin, A., Wang, F.-Y., Degenerate Fokker-Planck equations:
 Bismut formula, gradient estimate and Harnack inequality, {\it J. Differential Equations}, {\bf253} (2012),   20--40.

\bibitem{G98}  Gy\"ongy, I., A note on Euler's approximations, {\it Potential Anal.}, {\bf8} (1998),  205--216.

\bibitem{GR}  Gy\"ngy, I., R\'{a}sonyi, M., A note on Euler approximations for SDEs with H\"older continuous diffusion
coefficients,
 {\it Stochastic Process. Appl.}, {\bf121} (2011),  2189--2200.

\bibitem{GS}Gy\"ongy, I., Sabanis, S., A note on Euler approximations for
stochastic differential equations with delay, {\it Appl. Math.
Optim.}, {\bf68} (2013), 391--412.

\bibitem{HK}
Halidias, N., Kloeden, P. E.,  A note on the Euler-Maruyama scheme
for stochastic differential equations with a discontinuous monotone
drift coefficient, {\it BIT}, {\bf48}(2008), 51--59.

\bibitem{Klo}
Kloeden, P. E., Platen, E.,   {\sl Numerical solution of stochastic
differential equations,}  Springer, 1992, Berlin.

\bibitem{Mao}
Mao, X.,  \emph{Stochastic differential equations and applications,}
Second Edition, Horwood Publishing Limited, 2008, Chichester.

\bibitem{MSH}   Mattingly, J.~C., Stuart, A.~M., Higham, D.~J., Ergodicity for SDEs and approximations: locally Lipschitz vector
fields and degenerate noise,
 {\it Stochastic Process. Appl.}, {\bf101} (2002),   185--232.

\bibitem{MX} Mikulevicius, R., Xu,   F., On the rate of convergence of strong Euler approximation for SDEs driven by L\'{e}vy
processes,  arXiv:1608.02303v1.

\bibitem{NT} Ngo, H.-L., Taguchi, D., Strong rate of convergence for the Euler-Maruyama approximation of stochastic differential equations
with irregular coefficients,
 {\it Math. Comp.}, {\bf85} (2016),  1793--1819.

\bibitem{NT2} Ngo, H.-L., Taguchi, D., Strong convergence for the Euler-Maruyama
approximation of stochastic differential equations with
discontinuous coefficients, arXiv:1604.01174v1.

\bibitem{NT1} Ngo, H.-L., Taguchi, D., On the Euler-Maruyama approximation for one-dimensional
stochastic differential equations with irregular coefficients,
arXiv:1509.06532v1.

\bibitem{PT} Pamen, O. M., Taguchi, D., Strong rate of convergence for the Euler-Maruyama
approximation of SDEs with H\"older continuous drift coefficient,
arXiv1508.07513v1.

\bibitem{W14}   Wang, F.-Y.,  Hypercontractivity for Stochastic Hamiltonian
Systems,  arXiv:1409.1995.

\bibitem{W}  Wang, F.-Y.,  Gradient estimates and applications for SDEs in Hilbert space with multiplicative noise and Dini
continuous drift,  {\it J. Differential Equations}, {\bf260} (2016),
2792--2829.

\bibitem{WZ}Wang, F.-Y., Zhang, X., Degenerate SDE with H\"older-Dini
Drift and Non-Lipschitz Noise Coefficient, {\it SIAM J. Math.
Anal.}, {\bf48} (2016),   2189--2226.

\bibitem{WZ1}Wang, F.-Y., Zhang, X.,  Degenerate SDEs in Hilbert spaces with
rough drifts, {\it Infin. Dimens. Anal. Quantum Probab. Relat.
Top.}, {\bf18} (2015),   1550026, 25 pp.

\bibitem{WZ13} Wang, F.-Y., Zhang, X., Derivative formula and applications for degenerate diffusion
semigroups,
 {\it J. Math. Pures Appl.},  {\bf99} (2013),   726--740.

\bibitem{Y02}   Yan, L., The Euler scheme with irregular coefficients, {\it Ann.
Probab.}, {\bf30} (2002),  1172--1194.

\bibitem{YM} Yuan, C., Mao, X., A note on the rate of convergence of the Euler-Maruyama method for stochastic differential
equations,
 {\it Stoch. Anal. Appl.}, {\bf26} (2008),  325--333.

\bibitem{Zhang}   Zhang, X., Stochastic flows and Bismut formulas for stochastic
Hamiltonian systems, {\it Stoch. Proc. Appl.}, {\bf 120} (2010),
1929--1949.

}
\end{thebibliography}

\end{document}